\renewcommand{\a }{\alpha }
\renewcommand{\b }{\beta }
\renewcommand{\d}{\delta }
\newcommand{\e }{\epsilon }
\newcommand{\ve}{\varepsilon}
\newcommand{\gve}{{g_{\varepsilon}}}
\newcommand{\g }{\gamma}
\renewcommand{\l }{\lambda }
\newcommand{\m }{\mu }
\newcommand{\n }{\nabla }
\newcommand{\Sp}{\mathbb{S}}
\renewcommand{\div }{{\rm{div}} }
\newcommand{\intbar}{\mathop{\int\makebox(-13.5,0){\rule[4pt]{.7em}{0.3pt}}%
\kern-6pt}\nolimits}
\newcommand{\be}{\begin{equation}}
\newcommand{\ee}{\end{equation}}
\newcommand{\C}{\mathbb{C}}
\newcommand{\R}{\mathbb{R}}
\newcommand{\Rtre}{\mathbb{R}^3}
\newcommand{\N}{\mathbb{N}}
\newcommand{\eps}{\varepsilon}
\newcommand{\p}{\partial}
\def\bn{\vec{n}}
\def\bX{\vec{X}}
\def\bx{\vec{x}}
\def\bH{\vec{H}}
\DeclareMathOperator{\Area}{Area}
\DeclareMathOperator{\diam}{diam}
\DeclareMathOperator{\Scal}{Scal}
\DeclareMathOperator{\Ric}{Ric}
\DeclareMathOperator{\Id}{Id}
\begin{document}

\author{Paul Laurain$^{1}$, Andrea Mondino$^{2}$}

\date{}

\title{Concentration of small Willmore spheres  in Riemannian 3-manifolds}

\newtheorem{lem}{Lemma}[section]
\newtheorem{pro}[lem]{Proposition}
\newtheorem{thm}[lem]{Theorem}
\newtheorem{rem}[lem]{Remark}
\newtheorem{cor}[lem]{Corollary}
\newtheorem{df}[lem]{Definition}
\newtheorem{ex}[lem]{Example}
\newtheorem*{Theorem}{Theorem}
\newtheorem*{Lemma}{Lemma}
\newtheorem*{Proposition}{Proposition}
\newtheorem*{claim}{Claim}

\maketitle

\footnotetext[1]{Institut Math\'ematiques de Jussieu,Paris VII, B\'atiment Sophie Germain, Case 7012,
75205 PARIS Cedex 13, France. E-mail address: laurainp@math.jussieu.fr }

\footnotetext[2]{ETH, R\"amistrasse 101, Zurich, Switzerland. E-mail address: andrea.mondino@math.ethz.ch}

\

\

\begin{center}
\noindent {\sc abstract}. 
Given a 3-dimensional Riemannian manifold $(M,g)$, we  prove that  if $(\Phi_k)$ is a sequence of Willmore spheres (or more generally area-constrained Willmore spheres), having Willmore energy bounded above uniformly strictly  by $8 \pi$, and  Hausdorff converging to a point $\bar{p}\in M$, then $\Scal(\bar{p})=0$ and $\nabla \Scal(\bar{p})=0$ (resp. $\nabla \Scal(\bar{p})=0$). Moreover, a suitably rescaled sequence smoothly converges, up to subsequences and reparametrizations, to a round sphere in the euclidean $3$-dimensional space. 
\\This generalizes previous results  of Lamm and Metzger contained in \cite{LM1}-\cite{LM2}.
\\An application to the Hawking mass is also established.
\bigskip\bigskip

\noindent{\it Key Words: Willmore functional, Hawking mass, blow up technique, concentration phenomena, fourth order  nonlinear Elliptic PDEs.} 
\bigskip

\centerline{\bf AMS subject classification: }
49Q10, 53C21, 53C42 , 35J60, 83C99.
\end{center}

\section{Introduction}\label{s:in}
Let  $\Sigma$ be  a closed two dimensional surface and $(M,g)$ a $3$-dimensional Riemannian manifold. Given a smooth immersion $\Phi:\Sigma \hookrightarrow M$, $W(\Phi)$ denotes the Willmore energy of $\Phi$ defined by 
\begin{equation}
W(\Phi):=\int_{\Sigma} H^2 \, dvol_{\bar{g}},
\end{equation}
where $\bar{g}:=\Phi^* (g)$ is the pullback metric on $\Sigma$ (i.e. the metric induced by the immersion), $dvol_{\bar{g}}$ is the associated volume form, and $H$ is the mean curvature of the immersion $\Phi$ (we adopt the convention that $H=\frac{1}{2} \bar{g}^{ij} A_{ij}$ where $A_{ij}$ is the second fundamental form; or, in other words, $H$ is the arithmetic mean of the two principal curvatures).
\\

In case the ambient manifold is the euclidean 3-dimensional space, the topic is classical and goes back to the works of Blaschke and Thomsen in 1920-'30 who were looking for a conformal invariant theory which included minimal surfaces; the functional was later rediscovered by Willmore \cite{Will} in the 60'ies and from that moment there have been a flourishing of results (let us mention the fundamental paper of Simon \cite{SiL}, the work of Kuwert-Sch\"atzle \cite{KS1}-\cite{KS}-\cite{KS2}, the more recent approach by Rivi\`ere \cite{Riv1}-\cite{Riv2}-\cite{RivPCMI}, etc.) culminated with the recent proof of the Willmore Conjecture by Marques and Neves \cite{MN} by min-max techniques (let us mention that partial results towards the Willmore conjecture were previously obtained by Li and Yau \cite{LY}, Montiel and Ros \cite{MonRos},  Ros \cite{Ros}, Topping \cite{Top}, etc., and that a crucial role in the proof of the conjecture is played by a result of Urbano \cite{Urb}).


On the other hand, the investigation of the Willmore functional in non constantly curved Riemannian manifolds is a much more recent topic started in \cite{Mon1} (see also \cite{Mon2} and the more recent joint work with Carlotto \cite{CM}) where the second author studied existence and non existence of Willmore surfaces in a perturbative setting.
\\Smooth minimizers of the $L^2$ norm of the second fundamental form among spheres in compact Riemannian three manifolds were obtained in collaboration with Kuwert and Schygulla in \cite{KMS} where the full regularity theory for minimizers was settled taking inspiration from the  approach of Simon \cite{SiL} (see also  \cite{MonSchy} for minimization in non compact Riemannian manifolds).
 \\Let us finally mention the work in collaboration with Rivi\`ere \cite{MR1}-\cite{MR2} where, using a ``parametric approach'' inspired by the Euclidean theory of  \cite{Riv1}-\cite{Riv2}-\cite{RivPCMI}, the necessary tools for studying the calculus of variations of the Willmore functional in Riemannian manifolds (i.e. the definition of the weak objects and related  compactness and regularity issues) are settled together with applications; in particular the existence and regularity of Willmore spheres in homotopy classes is established.
 \\

Since -as usual in the calculus of variations- the  existence results are obtained by quite general techniques and do not describe  the minimizing object, the purpose of the present paper is to  investigate the geometric properties of the critical points of $W$.  
\\More precisely we investigate the following natural questions: Let $\Phi_k: \Sp^2 \hookrightarrow M$ be a sequence of smooth critical points of the Willmore functional $W$ (or more generally we will also consider critical points under area constraint) converging  to a point $\bar{p}\in M$ in Hausdorff distance sense; what can we say about $\Phi_k$? are they becoming more and more round? Has the limit point $\bar{p}$ some special geometric property?
\\

These questions  have already been addressed in recent  articles -below the main known results are recalled by the reader's convenience-, but in the present paper we are going to obtain the sharp answers.
\\Before passing to describe the known and the new results in this direction, let us recall that a critical point of the Willmore functional is called \emph{Willmore surface} and it satisfies:
\begin{equation}\label{eq:PDEW}
\Delta_{\bar{g}} H + H |A^\circ|^2 + H \Ric(\bn, \bn)=0,
\end{equation}
where $\Delta_{\bar{g}}$ is the Laplace-Beltrami operator corresponding to the metric $\bar{g}$, $(A^\circ)_{ij}:=A_{ij}-H\bar{g}_{ij}$ is the trace-free second fundamental form, $\bn$ is a normal unit vector to $\Phi$, and $\Ric$ is the Ricci tensor of the ambient manifold $(M,g)$. Notice that \eqref{eq:PDEW} is a  fourth-order nonlinear elliptic PDE in the parametrization map $\Phi$.
\\Throughout the paper we will consider more generally \emph{area-constrained Willmore surfaces}, i.e. critical points of the  Willmore functional under area constraint; the immersion $\Phi$ is an area-constrained Willmore surface if and only if it satisfies
\begin{equation}\label{eq:PDEWC}
\Delta_{\bar{g}} H + H |A^\circ|^2 + H \Ric(\bn, \bn)=\lambda H,
\end{equation}
for some  $\lambda\in \R$ playing the role of Lagrange multiplier. 
\\

The first result in the direction of the above questions was achieved in the master degree thesis of the second author \cite{Mon1} where it was proved that if $(\Phi_k)$ is a sequence of Willmore surfaces obtained as normal graphs over shrinking geodesic spheres centered at a point $\bar{p}$, then the scalar curvature at $\bar{p}$ must vanish: $\Scal(\bar{p})=0$.

In the subsequent papers  \cite{LM1}-\cite{LM2},  Lamm and Metzger  proved that if $\Phi_k:\Sp^2 \hookrightarrow M$ is a sequence of area-constrained Willmore surfaces converging to a point $\bar{p}$ in Hausdorff distance sense and such that \footnote{notice that the normalization of the Willmore functional used in  \cite{LM1}-\cite{LM2} differ from our convention by a factor 2}
\be\label{eq:assLM}
W(\Phi_k) \leq 4\pi+ \varepsilon \quad \text{ for some $\varepsilon>0$ small enough},
\ee
then $\nabla \Scal(\bar{p})=0$ and, up to subsequences, $\Phi_k$ is $W^{2,2}$-asymptotic to a geodesic sphere centered at $\bar{p}$. Moreover in \cite{LM2}, using the regularity theory developed in \cite{KMS},  they showed that if $(M,g)$ is any compact Riemannian $3$-manifold and $a_k$ is any sequence of positive real numbers such that $a_k\downarrow 0$   then there exists a smooth minimizer $\Phi_k$ of $W$ under the area-constraint $\Area(\Phi_k)=a_k$; moreover such sequence $(\Phi_k)$ satisfies \eqref{eq:assLM} and therefore it $W^{2,2}$-converges to a round critical point of the scalar curvature. Let us mention that the existence of area-constrained Willmore spheres was generalized in \cite{MR2} to any value of the area.
\\

The goal of this paper is multiple. The main achievement is the improvement of the perturbative bound \eqref{eq:assLM} above to the global bound 
\be\label{eq:ass}
\limsup_{k} W(\Phi_k) < 8 \pi.
\ee
Secondly we  improve the $W^{2,2}$-convergence above to \emph{smooth} convergence towards a \emph{round} critical point of the scalar curvature, i.e. we show that if we rescale  $(M,g)$ around $\bar{p}$ in such a way that the sequence of surfaces has fixed area equal to one (for more details see Section \ref{Sec:NP}), then the sequence converges smoothly, up to subsequences, to a round sphere centered at $\bar{p}$, and $\bar{p}$ is a critical point of the scalar curvature of $(M,g)$.
\\Finally  we  give  an application of these results to the Hawking mass.  
\\We believe  that the bound \eqref{eq:ass} is sharp in order to have smooth convergence to a \emph{round} point (in the sense specified above);  indeed, if \eqref{eq:ass} is violated then  the sequence $(\Phi_k)$ may degenerate  to a couple of bubbles,  each one costing almost $4 \pi$ in terms of Willmore energy.  
\\

Now let us state the main results of the present article.  The first theorem below concerns the case of a sequence of Willmore immersions and it is a consequence of the second more general theorem about area-constrained Willmore immersions.
\begin{thm}\label{thm1}
Let $(M,g)$ be a 3-dimensional Riemannian manifold and let $\Phi_k: \Sp^2 \hookrightarrow M$ be a sequence of Willmore surfaces satisfying the energy bound  \eqref{eq:ass} and 
Hausdorff converging to a point $\bar{p}\in M$. 

Then $\Scal(\bar{p})=0$ and $\nabla \Scal(\bar{p})=0$; moreover, if we rescale $(M,g)$ around $\bar{p}$ in such a way that the rescaled immersions $\tilde{\Phi}_k$ have fixed area equal to one, then $\tilde{\Phi}_k$ converges smoothly, up to subsequences and up to reparametrizations, to a round sphere in the 3-dimensional euclidean space.
 \end{thm}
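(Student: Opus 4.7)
The plan is a blow-up analysis at $\bar{p}$ that produces a round sphere in Euclidean $\R^3$ as limit, followed by testing the Willmore equation at higher orders in the scale parameter to extract the conditions on $\Scal(\bar{p})$ and $\nabla\Scal(\bar{p})$. Set $\rho_k := \sqrt{\Area_g(\Phi_k)}$ and, in a normal chart at $\bar{p}$, define the rescaled immersions $\tilde \Phi_k := \rho_k^{-1}\Phi_k$ in the rescaled ambient metric $\tilde g_k := \rho_k^{-2}\,\exp_{\bar{p}}^{\,*}g(\rho_k\,\cdot\,)$. The normal-coordinate expansion
$$
\tilde g_{k,ij}(x) = \delta_{ij} - \tfrac{1}{3}\rho_k^{2}\,R_{ikjl}(\bar{p})\,x^{k}x^{l} - \tfrac{1}{6}\rho_k^{3}\,\nabla_{m}R_{ikjl}(\bar{p})\,x^{k}x^{l}x^{m} + O(\rho_k^{4})
$$
shows $\tilde g_k\to g_{\eucl}$ in $C^{\infty}_{loc}(\R^{3})$; Hausdorff convergence together with Simon's diameter bound $\diam_g(\Phi_k)^{2}\le C\,\Area_g(\Phi_k)\,W(\Phi_k)$ (valid when $W<8\pi$) forces $\rho_k\to 0$. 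By conformal invariance of $W$ one has $W_{\tilde g_k}(\tilde\Phi_k)\le 8\pi-\delta$ and $\Area_{\tilde g_k}(\tilde\Phi_k)=1$, and the $\tilde\Phi_k$ solve \eqref{eq:PDEW} in $(\R^{3},\tilde g_k)$ with $\Ric_{\tilde g_k}=O(\rho_k^{2})$.

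The second step is to extract a smooth limit. Simon's diameter bound confines the $\tilde\Phi_k$ to a fixed ball of $\R^3$ after suitable translation. The $\varepsilon$-regularity and compactness theory for Willmore immersions in Riemannian targets developed in \cite{MR1,MR2,KMS}, combined with Li--Yau's inequality, yields a smooth subsequential limit $\tilde\Phi_{\infty}:\Sp^{2}\hookrightarrow \R^{3}$, a closed Willmore immersion with $W\le 8\pi-\delta$. The strict bound $W<8\pi$ rules out bubble splitting (each bubble would carry at least $4\pi$ by Li--Yau), and Bryant's classification of Willmore spheres in $\R^{3}$ --- whose energies are quantised at $4\pi k$, $k\ge 1$ --- identifies $\tilde\Phi_{\infty}$ as a round unit sphere. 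An elliptic bootstrap on \eqref{eq:PDEW} then upgrades the convergence from $W^{2,2}$ to $C^{\infty}$.

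With a smooth round-sphere limit in hand, the conditions on $\Scal$ follow by inserting the metric expansion into \eqref{eq:PDEW}, expanding $H_k$ and $A_k$ around their round-sphere values, and matching orders in $\rho_k$. Testing against the three translation conformal Killing fields of the limit sphere yields, at order $\rho_k^{3}$, the identity $\nabla\Scal(\bar{p})=0$, exactly as in \cite{LM1,LM2}. Testing against the radial dilation --- an admissible variation precisely because $\Phi_k$ is an unconstrained Willmore immersion (in the area-constrained setting \eqref{eq:PDEWC} the Lagrange multiplier $\lambda$ would absorb it) --- yields, at order $\rho_k^{2}$, the identity $\Scal(\bar{p})=0$. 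This additional equation is exactly what distinguishes Theorem~\ref{thm1} from the more general area-constrained statement.

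The main obstacle is the compactness step above: at the sharp strict threshold $W<8\pi$ one cannot avoid a careful Li--Yau-based bubble analysis to rule out double-bubble degeneration and neck-pinching before a single smooth limit can be extracted. Lamm--Metzger's assumption $W\le 4\pi+\varepsilon$ placed them in a $C^{\infty}$-neighbourhood of a round sphere and made smooth convergence almost automatic; at the optimal threshold $8\pi$ the bubble analysis is indispensable and requires the full Riemannian $\varepsilon$-regularity of \cite{MR1,MR2}. Once this compactness is in place, the asymptotic matching of the previous paragraph is a direct computation in the spirit of \cite{Mon1,LM1,LM2}.
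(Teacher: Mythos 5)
Your overall architecture coincides with the paper's: rescale in normal coordinates at the limit point so that the metric becomes $\delta+O(\rho_k^2)$, use $\varepsilon$-regularity, Li--Yau and Bryant's classification under the strict $8\pi$ bound to get smooth subsequential convergence to a round sphere (this is Lemma \ref{lem:reparametrize}), and then read off curvature conditions from higher-order terms of the equation. The one genuinely different ingredient is how you obtain $\Scal(\bar p)=0$: the paper simply quotes \cite[Theorem 1.3]{Mon1} (Willmore graphs over shrinking geodesic spheres force the scalar curvature to vanish at the center), which applies once Theorem \ref{thm2} gives smooth convergence to a round sphere; you instead test the first variation of $W$ against the dilation field and extract the coefficient of $\rho_k^2$. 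This is a legitimate self-contained alternative --- it is exactly the mechanism behind the paper's Lemma \ref{lem:lagrOek}, where $\delta_{\bx}W_{g_{\ve_k}}(\Phi_k)=O(\ve_k^2)$ is proved; for unconstrained Willmore immersions this quantity vanishes identically, and computing the $\ve_k^2$-coefficient on the limit round sphere yields a nonzero multiple of $\Scal(\bar p)$. What it costs you is precisely that computation: you must track all three $O(\ve^2)$ contributions (the $\Ric(\bn,\bn)$ term, the $R^\perp$ term, and the Christoffel corrections to $D\bx$), verify they sum to $c\,\Scal(\bar p)$ with $c\neq 0$, and justify passing to the limit in the coefficient using the smooth convergence. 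Your remark that the dilation is unavailable in the area-constrained case because $\lambda$ absorbs it is correct and is exactly why Theorem \ref{thm2} only yields $\nabla\Scal(\bar p)=0$.

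One step is materially under-specified. For $\nabla\Scal(\bar p)=0$ you write that ``matching orders in $\rho_k$'' and testing against translations at order $\rho_k^3$ is ``a direct computation in the spirit of \cite{Mon1,LM1,LM2}.'' It is not direct at the $8\pi$ threshold: the $\rho_k^2$-order terms of the equation do \emph{not} vanish, so before any $\rho_k^3$-order identity can be extracted one must construct the corrector $\rho_\ve$ of \eqref{defro} solving the $\ve^2$-order equation, normalize away the kernel of the linearized Willmore operator $L_\omega$ by the translations/rotations/dilation adjustments of \eqref{pest2}, classify that kernel (Lemma \ref{coin}), and run a blow-up argument on the remainder $r^\ve=\Phi^\ve-\Omega^\ve$ to show $\|r^\ve\|_{C^4}=O(\ve^3)$. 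Only then does integrating the limiting equation $L_\omega(\tilde r)=\Delta(\tfrac1{12}\Ric_{\a\b,\g}\,\omega^\a\omega^\b\omega^\g)$ against the kernel elements $\omega^\a$ produce $\nabla\Scal(\bar p)=0$. This is the content of Subsections \ref{approx}--\ref{subsec:3.4} and Appendix \ref{aL}, and in \cite{LM1,LM2} the analogous step was genuinely easier because the $4\pi+\varepsilon$ hypothesis placed the surfaces in a perturbative neighbourhood from the start. Your proposal correctly identifies the compactness as the main obstacle, but the remainder analysis is the second half of the proof, not an afterthought.
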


Actually we prove the following more general result about sequences of area-constrained Willmore immersions.

\begin{thm}\label{thm2}
Let $(M,g)$ be a 3-dimensional Riemannian manifold and let $\Phi_k: \Sp^2 \hookrightarrow M$ be a sequence of area-constrained Willmore surfaces satisfying the energy bound  \eqref{eq:ass} and 
Hausdorff converging to a point $\bar{p}\in M$. 

Then  $\nabla \Scal(\bar{p})=0$; moreover, if we rescale $(M,g)$ around $\bar{p}$ in such a way that the rescaled immersions $\tilde{\Phi}_k$ have fixed area equal to one, then $\tilde{\Phi}_k$ converges smoothly, up to subsequences and up to reparametrizations, to a round sphere in the 3-dimensional euclidean space.
\end{thm}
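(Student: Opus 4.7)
The plan is to perform a concentration--compactness analysis at the scale at which the surfaces have unit area. Set $r_k := \sqrt{\Area(\Phi_k)}$, which tends to $0$ since $\Phi_k$ Hausdorff-shrinks to $\bar p$, and work in Riemannian normal coordinates centered at $\bar p$ with rescaled metric $\tilde g_k := r_k^{-2} g$ and rescaled immersion $\tilde\Phi_k := r_k^{-1}\Phi_k$. Then $\tilde g_k \to g_{\text{eucl}}$ in $C^\infty_{\text{loc}}(\R^3)$, and $\tilde\Phi_k$ is area-constrained Willmore for $\tilde g_k$ with $\Area(\tilde\Phi_k)=1$, $W(\tilde\Phi_k)=W(\Phi_k)<8\pi$, and rescaled Lagrange multiplier $\tilde\lambda_k := r_k^2\lambda_k$. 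A Simon-type monotonicity inequality applied in the almost-euclidean metric $\tilde g_k$ converts the uniform energy and area bounds into $\diam(\tilde\Phi_k) \leq C$, so the rescaled surfaces lie in a fixed ball of $\R^3$.

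\textbf{Compactness and exclusion of bubbling.} I would next invoke an $\varepsilon$-regularity theorem for area-constrained Willmore immersions in nearly-euclidean metrics, in the spirit of \cite{MR1,MR2} (building on Kuwert--Sch\"atzle and Rivi\`ere): away from a finite set $\{q_1,\dots,q_N\}\subset\R^3$ of concentration points where $|A|^2$ accumulates at least a universal energy quantum $\varepsilon_0$, one has $C^\infty_{\text{loc}}$ bounds and smooth subconvergence. To rule out any such point, a secondary blow-up at $q_j$ by the concentration scale $\rho_{j,k}\to 0$ produces a non-trivial complete Willmore immersion $\Psi_j$ in $\R^3$ (the rescaled Lagrange multiplier vanishes since it scales like $\rho_{j,k}^2$); via one-point compactification this lifts to a branched Willmore sphere in $\Sp^3$ and hence satisfies $W(\Psi_j)\geq 4\pi$ by Li--Yau. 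The residual weak limit $\tilde\Phi_\infty:\Sp^2\to\R^3$ is a closed immersion whose area is bounded below (the preserved unit area cannot collapse entirely into a single bubble without triggering a second bubble), so $W(\tilde\Phi_\infty)\geq 4\pi$ by the Willmore inequality. A no-neck-loss energy-quantization identity then yields $\limsup_k W(\tilde\Phi_k) \geq W(\tilde\Phi_\infty)+\sum_j W(\Psi_j)\geq 8\pi$, contradicting \eqref{eq:ass}. Hence no bubble forms and $\tilde\Phi_k\to\tilde\Phi_\infty$ smoothly on $\Sp^2$, up to reparametrization.

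\textbf{Identification of the limit and vanishing of $\nabla\Scal(\bar p)$.} The limit $\tilde\Phi_\infty$ is an area-constrained Willmore sphere in $\R^3$ with $W<8\pi$; by Li--Yau it is embedded, and by the classification of embedded area-constrained Willmore spheres with energy below $8\pi$ it parametrizes a round sphere of some radius $R_\infty>0$. For $\nabla\Scal(\bar p)=0$, I would expand the rescaled metric as $\tilde g_k=\delta+r_k^2 h^{(2)}[\Ric(\bar p)]+r_k^3 h^{(3)}[\nabla\Ric(\bar p)]+O(r_k^4)$ and linearize the area-constrained Willmore equation \eqref{eq:PDEWC} around the limiting round sphere. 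The Lagrange multiplier $\tilde\lambda_k$ absorbs the order-$r_k^2$ contribution, which encodes $\Scal(\bar p)$ (in the unconstrained Theorem~\ref{thm1} one additionally extracts $\Scal(\bar p)=0$ from this order). At order $r_k^3$ the corrected equation reads $L\eta_k=r_k\,F[\nabla\Ric(\bar p)]+o(r_k)$, where $L$ is the self-adjoint linearized area-constrained Willmore operator around the round sphere; its kernel consists of the three coordinate translations of $\R^3$. Testing against these kernel elements and integrating by parts produces a Pohozaev-type identity of the form $c\,\nabla\Scal(\bar p)=0$ with $c\neq 0$, forcing $\nabla\Scal(\bar p)=0$.

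\textbf{Main obstacle.} The technically hardest step will be the bubble-exclusion argument: tracking the rescaled Lagrange multiplier $\tilde\lambda_k$ through the secondary blow-up, running a clean neck analysis in the Riemannian setting so that no Willmore energy leaks into degenerating annular regions, and making sure each nontrivial bubble is accounted for by a full $4\pi$-quantum via Li--Yau. This needs the weak compactness and energy-quantization theory for Willmore immersions developed in \cite{MR1,MR2}, together with a uniform a priori bound $|\tilde\lambda_k|\leq C$ obtained by testing \eqref{eq:PDEWC} against $H$ and using $W<8\pi$.
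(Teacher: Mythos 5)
Your overall strategy coincides with the paper's: rescale to unit size, use $\varepsilon$-regularity plus the strict $8\pi$ bound to exclude a second bubble and get smooth subconvergence to a round sphere (Bryant/Li--Yau), then expand the equation in the rescaling parameter and test the third-order term against the kernel of the linearized operator to extract $\nabla\Scal(\bar p)=0$. However, there are two concrete gaps. First, the Lagrange multiplier. You only claim a uniform bound $|\tilde\lambda_k|\le C$ ``by testing against $H$''; testing \eqref{eq:PDEWC} against $H$ produces $\int |\nabla H|^2$ and $\int H^2|A^\circ|^2$ on the left, which are not controlled by the Willmore bound, so this does not even give the uniform bound. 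More importantly, a uniform bound is not enough: in the order-$r_k^3$ equation the term $\tilde\lambda_k H$ contributes an error of size $\tilde\lambda_k\cdot O(r_k^2)$ beyond what can be absorbed into the second-order corrector, so your claimed expansion $L\eta_k=r_k F[\nabla\Ric(\bar p)]+o(r_k)$ requires the sharp decay $\tilde\lambda_k=O(r_k^2)$. The paper devotes Lemma \ref{lem:lagrOek} to exactly this, obtained by testing the first variation identity against the dilation field $\bx$ (for which $\delta_{\bx}W_{g_0}=0$ exactly and $\delta_{\bx}\Area$ is bounded away from $0$) and using the divergence form \eqref{eq:WDiv} of the Willmore operator to avoid uncontrolled derivatives of $H$. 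Your proposal is missing this step and the method you suggest for it would fail.

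Second, the kernel issue in the final step. The kernel of the linearized Willmore operator about a round sphere is not just the three translations: it contains all infinitesimal conformal motions of $\R^3$ (rotations, dilation, special conformal maps) together with reparametrizations. Before one can solve for the remainder at order $r_k^3$ and test against kernel elements, one must fix this gauge; the paper does so by adjusting the comparison sphere and the parametrization (the choices $a^\ve,b^\ve,R^\ve,z^\ve$ in \eqref{pest2}) so that the remainder satisfies the normalizations under which Lemma \ref{coin} shows the relevant kernel is trivial, and only then deduces the contradiction giving $r^\ve=O(\ve^3)$ and the orthogonality relation $\int_{\Sp^2}\Ric_{\a\b,\g}(p_k)\,y^\a y^\b y^\g\, y\, dvol_h=0$, which combined with the second Bianchi identity yields $\nabla\Scal(\bar p)=0$. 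As written, your ``Pohozaev-type identity'' step skips the gauge-fixing and the classification of the kernel, so the constant $c$ in $c\,\nabla\Scal(\bar p)=0$ is not justified to be nonzero. (A minor further point: the limit is identified as round not by a ``classification of area-constrained Willmore spheres'' but by first showing the multiplier vanishes in the limit and then invoking Bryant's classification of Willmore spheres of energy below $8\pi$.)
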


Of course Theorem \ref{thm2} implies Theorem \ref{thm1} except the property $\Scal(\bar{p})=0$. This fact follows by the aforementioned \cite[Theorem 1.3]{Mon1} holding  for Willmore  graphs over  geodesic spheres, together with the smooth convergence to a round point ensured by  Theorem \ref{thm2}.
\\

Now we pass to discuss an application to the Hawking mass $m_H$, defined for an immersed sphere $\Phi:\Sp^2 \hookrightarrow (M,g)$ by
\be\label{eq:defmH}
m_H(\Phi)=\frac{\Area_g(\Phi)}{16 \pi^{3/2}} \left(4\pi-W(\Phi)\right).
\ee
Of course,  the critical points of the Hawking mass under area constraint are exactly the  area-constrained Willmore spheres (see \cite{LMS} and the references therein for more material about the Hawking mass); moreover it is clear that the inequality $m_H(\Phi)\geq 0$ implies that $W(\Phi)\leq 4\pi$. 
\\Therefore, combining this easy observations with Theorem \ref{thm2}, we obtain the following corollary.

\begin{cor}\label{cor:Haw}
Let $(M,g)$ be a 3-dimensional Riemannian manifold and let $\Phi_k: \Sp^2 \hookrightarrow M$ be a sequence of critical points of $m_H$ under area constraint having non negative Hawking mass and 
Hausdorff converging to a point $\bar{p}\in M$. 
 
Then  $\nabla \Scal(\bar{p})=0$; moreover, if we rescale $(M,g)$ around $\bar{p}$ in such a way that the rescaled immersions $\tilde{\Phi}_k$ have fixed area equal to one, then $\tilde{\Phi}_k$ converges smoothly, up to subsequences and up to riparametrizations, to a round sphere in the 3-dimensional euclidean space.
\end{cor}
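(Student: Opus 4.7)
The plan is to observe that Corollary \ref{cor:Haw} is an essentially immediate consequence of Theorem \ref{thm2}, once two elementary facts about the Hawking mass are recorded. The whole argument amounts to checking that the hypotheses of Theorem \ref{thm2} are met by the sequence $\Phi_k$ in question.

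First I would verify that each $\Phi_k$ is an area-constrained Willmore sphere in the sense of \eqref{eq:PDEWC}. Since the area appears as a multiplicative factor in \eqref{eq:defmH}, the first variation of $m_H$ under an area-preserving deformation reduces to a multiple of $-\delta W$; in particular, the Euler--Lagrange equation for $m_H$ under the constraint $\Area_g(\Phi)=\text{const}$ coincides, up to a nonzero factor in front of the Lagrange multiplier, with \eqref{eq:PDEWC}. Hence every critical point of $m_H$ under area constraint is an area-constrained Willmore immersion.

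Second, I would exploit the non-negativity assumption $m_H(\Phi_k)\ge 0$ to derive the energy bound \eqref{eq:ass}. From \eqref{eq:defmH}, since $\Area_g(\Phi_k)>0$, the inequality $m_H(\Phi_k)\ge 0$ is equivalent to
\begin{equation*}
W(\Phi_k)\le 4\pi,
\end{equation*}
so in particular $\limsup_k W(\Phi_k)\le 4\pi<8\pi$, which is stronger than \eqref{eq:ass}.

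With these two points in hand, the sequence $(\Phi_k)$ fits exactly into the framework of Theorem \ref{thm2}, and we directly conclude that $\nabla \Scal(\bar p)=0$ and that, after the rescaling of $(M,g)$ around $\bar p$ normalizing the area to one, the rescaled sequence $\tilde{\Phi}_k$ converges smoothly, up to subsequences and reparametrizations, to a round sphere in $\R^3$. There is no genuine obstacle in this corollary; the only point requiring a brief justification is the identification of critical points of $m_H$ under area constraint with area-constrained Willmore surfaces, and this is already classical (see \cite{LMS}).
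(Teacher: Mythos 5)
Your proposal is correct and follows exactly the paper's own (very short) argument: the authors likewise note that critical points of $m_H$ under area constraint are precisely the area-constrained Willmore spheres, that $m_H(\Phi_k)\ge 0$ forces $W(\Phi_k)\le 4\pi<8\pi$ so that \eqref{eq:ass} holds, and then invoke Theorem \ref{thm2}. Nothing is missing.
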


Let us briefly  comment on the relevance of   Corollary \ref{cor:Haw} despite the triviality of its proof.   Recall that, from the note of Christodoulou and Yau \cite{CY},  if $(M,g)$ has non negative scalar curvature then isoperimetric spheres (and more generally stable CMC spheres)  have positive Hawking mass; on the other hand it is known (see for instance \cite{Druet} or \cite{NardAGA}) that,  if $M$ is compact, then small  isoperimetric regions converge to geodesic spheres centered at a maximum point of the scalar curvature as the enclosed volume converges to $0$ (see also \cite{MonNard} for the non-compact case). Therefore a link between regions with positive Hawking mass and critical points of the scalar curvature  was already present in literature, but Corollary \ref{cor:Haw} expresses this link precisely.   
\\

We end the introduction by outlying the structure of the paper and the main ideas of the proof. First of all, as already noticed, it is enough to prove Theorem \ref{thm2} in order to get all the stated results. To prove it, we adopt the blow up technique taking inspiration from  \cite{Lau1} where the first author analyzed the corresponding questions  in the context of CMC-surfaces;  such technique was introduced in the analysis of the Yamabe problem which is a second order  scalar problem (for an detailed overview of the method including applications see \cite{DHR}), the technical novelty of \cite{Lau1} was that that a second order \emph{vectorial} problem was considered; the technical  originality of the  present paper from the point of view of the blow up method  is that  we study a \emph{fourth order vectorial problem}.

More precisely, in Section \ref{Sec:NP} we consider normal coordinated centered at the limit point $\bar{p}$ and we rescale appropriately the metric $g$ such that the rescaled surfaces have all diameter one (or, thanks to the monotonicity formula, it is equivalent  to fix the area of the rescaled surfaces equal to one); notice that the rescaled ambient metrics $g_k$ are  becoming more and more euclidean.
\\In Subsection \ref{subsec:AreaConstrW}, by exploiting the divergence form of the Willmore equation established in \cite{MR2}, we give a decay estimate on the Lagrange multipliers as $k$ goes to infinity.

Section \ref{Sec:3} is devoted to the proof of Theorem \ref{thm2}; we start in Subsection \ref{SubSec:3.1} by establishing a fundamental technical result telling that, under the above working assumptions, the sequence $(\Phi_k)$ converges smoothly to a round sphere, up to subsequences and  reparametrizations. Let us remark that in the proof we exploit in a   crucial way the assumption \eqref{eq:ass}, otherwise it may be possible for the sequence to degenerate to a couple of bubbles. Once we have smooth convergence to a round sphere $\omega$, we study the remainder given by the difference between $\Phi_k$ and $\omega$: in Subsection \ref{approx} we use the linearized Willmore operator (recalled in Appendix \ref{aL}) in order to give  precise asymptotics of such remainder term and in the final Subsection \ref{subsec:3.4} we refine these  estimates and conclude the proof.

\subsection{Acknowledgment}
The authors acknowledge the partial support of CNRS and Institut Math\'ematiques de Jussieu which made possible a visit of A.M. to Paris where the project was started. A.M. is supported by  the ETH fellowship.   

\section{Notation and preliminaries}\label{Sec:NP}
Throuhout the paper $(M,g)$ is a Riemannian 3-manifold and  $\Sp^2$ is the round $2$-sphere of unit radius in $\R^3$. The greek indexes $\a,\b,\g, \m,\nu$ will run from 1 to 3 and will denote quantities in $M$, latin indexes will run from 1 to 2 and will denote quantities on $\Phi_k(\Sp^2)$, we will always use Einstein notation on summation over indexes. 
Given a smooth immersion $\Phi:\Sp^2 \hookrightarrow (M,g)$ we call  $\bar{g}=\Phi^*(g)$ the pullback metric, $dvol_{\bar{g}}$ the induced area form, $H_{g,\Phi}$ the mean curvature and
$$W_g(\Phi):= \int_{\Sp^2} |H_{g,\Phi}|^2 dvol_{\bar{g}} $$
is the Willmore functional.


Now let $(\Phi_k)$ be a sequence of smooth immersions from  $\Sp^2$ into $M$. Under our working assumptions, called $\diam_g(\Omega)$  the diameter of the subset $\Omega$ of  $M$ with respect to the metric $g$, we will always have 
\begin{eqnarray} 
\ve_k&:=&\diam_g(\Phi_k(\Sp^2)) \to 0 \label{eq:diamto0},\\
W_g(\Phi_k)&:=& \int_{\Sp^2} |H_{g,\Phi_k}|^2 dvol_{\bar{g}_k}\leq 8 \pi-2\delta,\quad \text{for some } \delta>0 \text{ independent of } k \label{eq:HpWg}
\end{eqnarray}
where $dvol_{\bar{g}_k}$ is the area form on $\Sp^2$ associated to the pullback  metric $\bar{g}_k=\Phi_k^*(g)$ and $H_{g,\Phi_k}$ is the mean curvature of $\Phi_k$. 

Notice that in case  $M$ is compact then  \eqref{eq:diamto0} is sufficient to ensure that, up to subsequences, $\Phi_k(\Sp^2)$ converges to a point $\bar{p} \in M$ in Hausdorff distance sense; but since there is no further reason to restrict to a compact ambient manifold we assume the  convergence to $\bar{p}$ in the hyphotesis of our main results instead of a compactness assumption on $M$. 

In order to efficiently handle the geometric quantities we need good coordinates; let us now introduce them. Take coordinates $(x^\mu), \mu=1,2,3$ around $\bar{p}$ and let $p_k=(p^1_k,p^2_k,p^3_k)$ be the center of mass of $\Phi_k(\Sp^2)$:
$$p_k^\mu=\frac{1}{\Area_g(\Phi_k)} \int_{\Sp^2} \Phi^\mu_k dvol_{\bar{g}_k},\quad \mu=1,2,3, $$
where  $\Area_g(\Phi_k)= \int_{\Sp^2} dvol_{\bar{g}_k}$ is the area of $\Phi_k(\Sp^2)$.
 Clearly, up to subsequences, $p_k \to \bar{p}$.
 \\ For every $k\in \N$ consider the exponential normal coordinates centered in $p_k$ and  rescale this chart by a factor $\frac{1}{\ve_k}$ with respect to the center of these coordinates. Hence we get a new sequence of immersions  $\tilde{\Phi}_k:\Sp^2 \hookrightarrow (\Rtre, g_{\ve_k})$, in the sequel simply denoted by $\Phi_k$, where the metric $g_{\ve_k}$ is defined by
\be\label{def:ge}
g_{\ve_k}(y)(u,v):=g(\ve_k y)(\ve_k^{-1}u, \ve_k^{-1} v).
\ee 
Notice that now we have
\be\label{eq:RescQuant}
W_{g_{\ve_k}} (\Phi_k)\leq 8\pi-2\delta, \quad \diam_{g_{\ve_k}}(\Phi_k(\Sp^2))=1 \quad \text{and} \quad  \Phi_k(\Sp^2)\subset B_{g_{\ve_k}}(0,3/2),
\ee
where the first inequality is a consequence of the invariance under rescaling of the Willmore functional, and $B_{g_{\ve_k}}(0,3/2)$ is the metric ball  in $(\Rtre, g_{\ve_k})$ of center $0$ and radius $3/2$. By the classical expression of the metric in normal coordinates, we get that (see Appendix B in \cite{Lau1})
\be\label{eq:expge}
(g_{\ve_k})_{\m \nu}(y)=\d_{\m \nu}+\frac{\ve_k^2}{3} \,R_{\a \m \nu \b}(p_k) \,y^\a y^\b+\frac{\ve_k^3}{6} \,R_{\a \m \nu \b,\g}(p_k)\, y^\a y^\b y^\g+o(\ve_k^3),
\ee
the inverse metric is
\be\label{eq:expgei}
(g_{\ve_k})^{\m \nu}(y)=\d_{\m \nu}-\frac{\ve_k^2}{3} \,R_{\a \m \nu \b}(p_k) \,y^\a y^\b-\frac{\ve_k^3}{6} \,R_{\a \m \nu \b,\g}(p_k)\, y^\a y^\b y^\g+o(\ve_k^3),
\ee
the volume form of $g_{\ve_k}$ on  can be written as 
\be\label{eq:detge}
\sqrt{|g_{\ve_k}}|(y)=1-\frac{\ve_k^2}{6} \Ric_{\a \b}(p_k) y^\a y^\b-\frac{\ve_k^3}{12} \Ric_{\a \b,\g}(p_k) y^\a y^\b y^\g +o(\ve_k^3),
\ee
and the Christoffel symbols of $g_{\ve_k}$ can be expanded as
\be\label{eq:Gammage}
(\Gamma_{\ve_k})^\g_{\a \b}(y)=A_{\a \b \g \m}(p_k) y^\m \ve_k^2+ B_{\a \b \g \m \nu} y^\m y^\nu \ve_k^3+o(\ve_k^3) 
\ee
where $A_{\a \b \g \m}(p_k)=\frac{1}{3}(R_{\b \m \a \g}(p_k)+R_{\a \m \b \g}(p_k))$  and
\\$B_{\a \b \g \m \nu}(p_k)=\frac{1}{12} (2 R_{\b \m \a \g, \nu}(p_k)+2 R_{\a \mu \b \g,\nu}(p_k)+ R_{\b \m \nu \g, \a}+R_{\a \mu \nu \g, \b}(p_k)-R_{\a \mu \nu \b, \g}(p_k)). $
\\

Since by \eqref{eq:expge} the metric $g_{\ve_k}$ is close to the euclidean metric  in $C^\infty$ norm on $B_{g_0}(0,2)$, where $B_{g_0}(0,2)$ is the euclidean ball in $\R^3$ of center $0$ and radius $2$, recalling  \eqref{eq:RescQuant} we get the following lemma.
\begin{lem}\label{lem:AreaBound}
Let $g_{\ve_k}$ be the metric  defined in \eqref{def:ge} having  the form \eqref{eq:expge}; let $\Phi_k:\Sp^2 \hookrightarrow (\R^3, g_{\ve_k})$ be smooth immersions  with $\Phi_k(\Sp^2)\subset B_{g_{\ve_k}}(0,2)$  satisfying 
$$W_{g_{\ve_k}}(\Phi_k)\leq 8\pi-2\d, \quad \text{for some } \delta>0.$$
Then, for $k$ large enough, we have  
\be\label{eq:Wge}
W_{g_0}(\Phi_k)\leq 8 \pi-\d, \quad \frac{1}{2}\leq \diam_{g_0} (\Phi_k(\Sp^2)) \leq 2 \quad \text{and} \quad \Phi_k(\Sp^2)\subset B_{g_0}(0,2),
\ee
where $g_0$ is the euclidean metric on $\R^3$, $W_{g_0}$ is the euclidean Willmore functional and $B_{g_0}(0,2)$ is the euclidean ball of center $0$ and radius $2$ in $\R^3$.
It follows that, for large $k$, $\Phi_k:\Sp^2 \hookrightarrow (\R^3, g_{\ve_k})$ is a smooth embedding and that there exist constants $C_1,C_2>0$ such that
\be\label{eq:AreaBound}
0< \frac{1}{C_1}\leq \frac{1}{C_2} \Area_{g_0}(\Phi_k)\leq \Area_{g_{\ve_k}}(\Phi_k)\leq {C_2} \Area_{g_0}(\Phi_k) \leq C_1 < \infty.
\ee
\end{lem}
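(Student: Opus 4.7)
The content of the lemma is that all geometric quantities in \eqref{eq:Wge}--\eqref{eq:AreaBound} computed with respect to $g_{\ve_k}$ differ from the corresponding Euclidean ones by a controlled error, and the plan is to extract these comparisons directly from the Taylor expansions \eqref{eq:expge}--\eqref{eq:Gammage}. First, on a fixed Euclidean ball (say $B_{g_0}(0,3)$) one has the bi-Lipschitz bound $(1-C\ve_k^2)\,g_0\le g_{\ve_k}\le(1+C\ve_k^2)\,g_0$ together with $|\Gamma_{\ve_k}|=O(\ve_k^2)$. Three immediate consequences: the metric balls satisfy $B_{g_{\ve_k}}(0,2)\subset B_{g_0}(0,2+O(\ve_k^2))$, so $\Phi_k(\Sp^2)\subset B_{g_0}(0,2)$ for $k$ large (also using $\Phi_k(\Sp^2)\subset B_{g_{\ve_k}}(0,3/2)$ from \eqref{eq:RescQuant}); the two diameters of $\Phi_k(\Sp^2)$ agree up to a factor $1+O(\ve_k^2)$, yielding the diameter bounds in \eqref{eq:Wge} from $\diam_{g_{\ve_k}}(\Phi_k(\Sp^2))=1$; and the induced area forms satisfy $dvol_{\Phi_k^*g_{\ve_k}}/dvol_{\Phi_k^*g_0}=1+O(\ve_k^2)$ pointwise, giving the two-sided comparison $\Area_{g_{\ve_k}}(\Phi_k)\simeq\Area_{g_0}(\Phi_k)$ in the middle of \eqref{eq:AreaBound}.

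Second, to compare the Willmore energies I would expand the mean curvature operator: in a conformal parametrization, using $\Gamma_{\ve_k}=O(\ve_k^2)$ and $g_{\ve_k}^{ij}-\delta^{ij}=O(\ve_k^2)$, a direct computation gives
\[
\bigl|H_{g_{\ve_k}}(\Phi_k)\bigr|^2=\bigl|H_{g_0}(\Phi_k)\bigr|^2+O(\ve_k^2)\bigl(|A_{g_0}(\Phi_k)|^2+1\bigr)
\]
pointwise, and integrating against the comparable area measures yields
\[
\bigl|W_{g_{\ve_k}}(\Phi_k)-W_{g_0}(\Phi_k)\bigr|\le C\ve_k^2\Bigl(W_{g_0}(\Phi_k)+\int|A_{g_0}|^2\,dvol_{g_0}+\Area_{g_0}(\Phi_k)\Bigr).
\]
Since $\Phi_k(\Sp^2)$ is a topological sphere, the Gauss equation combined with Gauss--Bonnet gives $\int|A_{g_0}|^2\,dvol_{g_0}=4W_{g_0}(\Phi_k)-8\pi$, so the right-hand side reduces to $C\ve_k^2\bigl(W_{g_0}(\Phi_k)+\Area_{g_0}(\Phi_k)\bigr)$; an absorption argument then converts the $g_{\ve_k}$-Willmore bound into $W_{g_0}(\Phi_k)\le 8\pi-\delta$ for $k$ large, \emph{provided} a uniform upper bound on $\Area_{g_0}(\Phi_k)$ is available.

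It therefore remains to prove the extreme inequalities in \eqref{eq:AreaBound}. The lower bound follows from Simon's monotonicity formula in $\R^3$, combined with the diameter lower bound $\diam_{g_0}(\Phi_k(\Sp^2))\ge 1/2$ and the a priori Willmore bound $W_{g_0}(\Phi_k)<16\pi$. The upper bound is more subtle and I would obtain it via a compactness argument for closed immersed spheres with bounded $\int|A|^2$ contained in a fixed Euclidean ball, in the spirit of \cite{SiL} and \cite{KS}; this is where I anticipate the main technical obstacle. Finally, the embedding property is a direct consequence of the Li--Yau inequality in $\R^3$: since $W_{g_0}(\Phi_k)<8\pi$, the density of $\Phi_k$ is strictly less than $2$ at every point of its image, hence equal to $1$ by integrality, so the smooth immersion $\Phi_k$ is injective and therefore an embedding (a topological conclusion, independent of the choice of ambient metric).
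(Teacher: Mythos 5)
Your overall strategy (bi-Lipschitz metric comparison for \eqref{eq:Wge}, Simon's diameter--area--Willmore inequality for \eqref{eq:AreaBound}, Li--Yau for embeddedness) is the one the paper follows, and the parts you actually carry out are sound. But there is a genuine gap exactly where you flag ``the main technical obstacle'': the uniform upper bound on $\Area_{g_0}(\Phi_k)$. You propose to get it from a compactness argument for immersed spheres with bounded $\int|A|^2$ in a fixed ball; this is not a proof (compactness of which class, in which topology? a limiting argument would at best give finiteness along subsequences, not a quantitative uniform bound, and the standard compactness theorems take an area bound as \emph{input}), and it is in any case the wrong tool. The bound is an immediate consequence of the \emph{first} inequality in Simon's Lemma 1.1 --- the very lemma you already invoke for the lower bound: from
\[
\sqrt{\frac{\Area_{g_0}(\Phi_k)}{W_{g_0}(\Phi_k)}}\;\leq\; \diam_{g_0}\bigl(\Phi_k(\Sp^2)\bigr)
\]
one reads off $\Area_{g_0}(\Phi_k)\leq W_{g_0}(\Phi_k)\,\diam_{g_0}(\Phi_k(\Sp^2))^2\leq 4\,W_{g_0}(\Phi_k)$, using the diameter upper bound already established. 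This is precisely how the paper obtains both extreme inequalities of \eqref{eq:AreaBound} in one stroke from the displayed two-sided inequality of \cite{SiL}.

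Closing this gap also repairs a latent circularity in your ordering: your Willmore comparison $|W_{g_{\ve_k}}(\Phi_k)-W_{g_0}(\Phi_k)|\leq C\ve_k^2\bigl(W_{g_0}(\Phi_k)+\Area_{g_0}(\Phi_k)\bigr)$ is stated to require the area upper bound, which you defer to a later step that in turn presupposes an a priori bound $W_{g_0}(\Phi_k)<16\pi$. With $\Area_{g_0}(\Phi_k)\leq 4W_{g_0}(\Phi_k)$ in hand the error term becomes $C\ve_k^2\,W_{g_0}(\Phi_k)$ (after also using Gauss--Bonnet as you correctly do), and the whole right-hand side can be absorbed into the left for $k$ large, yielding $W_{g_0}(\Phi_k)\leq 8\pi-\delta$ with no external input. (The paper itself sidesteps the explicit pointwise comparisons by citing Lemmas 2.1--2.4 of \cite{MonSchy}, where the Riemannian-versus-Euclidean estimates for $W$, $\Area$ and $\diam$ on a fixed ball are carried out once and for all.) The remaining steps of your argument --- the comparison of area forms giving the middle inequalities of \eqref{eq:AreaBound} and the Li--Yau embeddedness argument --- coincide with the paper's.
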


\begin{proof}
The properties expressed in \eqref{eq:Wge} follow from \eqref{eq:RescQuant} by a direct estimate of the remainders given by the curvature terms of the metric $g_{\ve_k}$; for such estimates we refer to  Lemma 2.1, Lemma 2.2, Lemma 2.3 and Lemma 2.4 in \cite{MonSchy}. 
\\It is classically known that if the Willmore functional of an immersed closed surface in $(\R^3,g_0)$ is strictly below $8\pi$ then the immersion is actually an embedding (see \cite{LY} or  \cite{SiL}), so our second statement follows. 
\\In order to prove \eqref{eq:AreaBound} let us recall  Lemma 1.1 in \cite{SiL} stating that 
$$
\sqrt{\frac{\Area_{g_0}(\Phi_k)}{W_{g_0}(\Phi_k)}}\leq \diam_{g_0}{\Phi_k(\Sp^2)} \leq C \sqrt{\Area_{g_0}(\Phi_k){W_{g_0}(\Phi_k)}} \quad \text{for some universal } C>0,
$$
which, combined with the bound on $\diam_{g_0}(\Phi_k(\Sp^2))$ and $W_{g_0}(\Phi_k)$ expressed in \eqref{eq:Wge}, gives that there exists a constant $C_0>0$ such that
$$
0<\frac{1}{C_0}\leq Area_{g_0}(\Phi_k) \leq C_0 < \infty;
$$
the desired chain of inequalities \eqref{eq:AreaBound} follows then by estimating the  remainders as in Lemma 2.2 in \cite{MonSchy}.
\end{proof}

\subsection{The area-constrained Willmore equation and an estimate of the Lagrange multiplier } \label{subsec:AreaConstrW}
In the rest of the paper we will work with area-constrained Willmore immersions, i.e. critical points of the Willmore functional under the constraint that the area is fixed. If $\Phi:\Sp^2\hookrightarrow (M,g)$ is a smooth area-constraint Willmore immersion, then it satisfies the following PDE (see for instance Section 3 in \cite{LMS} for the derivation of the equation)
\be\label{eq:AreaConstrW}
\triangle_{\bar{g}}H_{g,\Phi}+H_{g,\Phi} |A^\circ_{g,\Phi}|_{\bar{g}}^2+H_{g,\Phi} \Ric_g(\bn_{g,\Phi}, \bn_{g,\Phi}) =\lambda H_{g,\Phi}
\ee
for some $\l \in \R$, where $\bn_{g,\Phi}$ is  a normal unit vector to $\Phi(\Sp^2)\subset(M,g)$, $(A^\circ_{g,\Phi})_{ij}$ is the traceless second fundamental form $(A^\circ_{g,\Phi})_{ij}=(A_{g,\Phi})_{ij}-\bar{g}_{ij} H_{g,\Phi}$ (of course $(A_{g,\Phi})_{ij}$ is the second fundamental form of $\Phi$ in $(M,g)$) and $|A^\circ_{g,\Phi}|_{\bar{g}}^2=\bar{g}^{ik} \bar{g}^{jl} (A^\circ_{g,\Phi})_{ij} (A^\circ_{g,\Phi})_{kl}$ is its norm with respect to the metric $\bar{g}=\Phi^*g$.
\\

Now let $(\Phi_k)$ be a sequence of smooth area-constrained Willmore immersions of $\Sp^2$ into $(M,g)$ satisfying \eqref{eq:diamto0}-\eqref{eq:HpWg}; perform the rescaling procedure described above and obtain the immersions $(\tilde{\Phi}_k)$ of $\Sp^2$ into $(\R^3,g_{\ve_k})$ (for simplicity denoted again with $\Phi_k$ from now on), where $g_{\ve_k}$ is defined in \eqref{def:ge}, satisfying \eqref{eq:RescQuant}. Since the Willmore functional is scale invariant, the rescaled surfaces are still area-constrained Willmore surfaces so they satisfy the following equation
\be\label{eq:AreaConstrWk}
\triangle_{\bar{g}_{\ve_k}}H_{g_{\ve_k},\Phi_k}+H_{g_{\ve_k},\Phi_k} \; |A^\circ_{g_{\ve_k},\Phi_k}|_{\bar{g}_{\ve_k}}^2+H_{g_{\ve_k},\Phi_k} \Ric_{g_{\ve_k}}(\bn_{g_{\ve_k},\Phi_k}, \bn_{g_{\ve_k},\Phi_k}) =\lambda_k H_{g_{\ve_k},\Phi_k}.
\ee
The first step in our arguments is to show that the Lagrange multipliers $\lambda_k$ are controlled by $\ve_k^2$. The idea for obtaining informations on the Lagrange multipliers, as in \cite{LM2}, is to use the invariance under rescaling of the Willmore functional. 

\begin{lem}\label{lem:lagrOek}
Let $(\Phi_k)$ be a sequence of smooth area-constrained Willmore immersions of $\Sp^2$ into $(\R^3,g_{\ve_k})$ where $g_{\ve_k}$ has the form \eqref{eq:expge} with  $\ve_k\to 0$, and $\Phi_k(\Sp^2)\subset B_{g_0}(0,2)$, the euclidean ball of center $0$ and radius $2$.

Then the Lagrange multipliers $\lambda_k$ appearing in \eqref{eq:AreaConstrWk} satisfies:
\be\label{eq:lambdae2}
\sup_{k\in \N} \frac{|\lambda_k|}{\ve_k^2}<\infty.
\ee
\end{lem}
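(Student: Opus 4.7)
Following the spirit of \cite{LM2}, I would exploit the conformal (scale) invariance of the Willmore functional in flat space together with the fact that, by \eqref{eq:expge}, the metric $g_{\ve_k}$ is a $C^\infty$-perturbation of the Euclidean one by a term of order $\ve_k^2$. Since each $\Phi_k$ is area-constrained Willmore, for any smooth one-parameter family $\Phi_k^t$ with $\Phi_k^0=\Phi_k$ the Euler--Lagrange condition yields an identity of the form
\[
\frac{d}{dt}\Big|_{t=0} W_{g_{\ve_k}}(\Phi_k^t) \;=\; c\,\l_k\,\frac{d}{dt}\Big|_{t=0} \Area_{g_{\ve_k}}(\Phi_k^t),
\]
where $c\neq 0$ is a convention-dependent constant coming from the relation $(\text{Willmore operator})=\l_k H$ in \eqref{eq:AreaConstrWk}. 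I would test this identity against the dilation variation $\Phi_k^t := (1+t)\Phi_k$, whose infinitesimal generator is the position vector field on $\R^3$.

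By diffeomorphism invariance of both functionals, and by the conformal invariance $W_{s^2 g}=W_g$ of Willmore under constant metric rescalings,
\[
\Area_{g_{\ve_k}}\bigl((1+t)\Phi_k\bigr) \;=\; (1+t)^2 \Area_{h_t}(\Phi_k), \qquad W_{g_{\ve_k}}\bigl((1+t)\Phi_k\bigr) \;=\; W_{h_t}(\Phi_k),
\]
where $h_t(y) := g_{\ve_k}((1+t)y)$. The expansion \eqref{eq:expge} gives $h_t = \d + O(\ve_k^2)$ in $C^\infty(B_{g_0}(0,2))$, uniformly for $|t|\leq 1/2$, with radial derivative $\dot h_0 = y^\a \partial_\a g_{\ve_k}(y) = O(\ve_k^2)$. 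Differentiating the area identity at $t=0$,
\[
\frac{d}{dt}\Big|_{t=0} \Area_{g_{\ve_k}}\bigl((1+t)\Phi_k\bigr) \;=\; 2\,\Area_{g_{\ve_k}}(\Phi_k) \,+\, O(\ve_k^2),
\]
which by Lemma \ref{lem:AreaBound} is bounded below by a positive constant $c_0$ for $k$ large.

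For the Willmore identity, the $t$-derivative at $t=0$ is the first variation of $W$ with respect to the ambient metric in the direction $\dot h_0$. Expanding the integrand $|H_{h,\Phi_k}|^2\sqrt{\det\Phi_k^* h}$ in the metric $h$ around $h_0=g_{\ve_k}$, one sees that this derivative is a linear functional of $\dot h_0$ and its first two derivatives, whose coefficients are integrals of geometric quantities of $\Phi_k$ (mean curvature, second fundamental form, area form) uniformly controlled by the energy bound \eqref{eq:Wge} and by \eqref{eq:AreaBound}. Since $\|\dot h_0\|_{C^2(B_{g_0}(0,2))}=O(\ve_k^2)$, one concludes
\[
\Bigl|\frac{d}{dt}\Big|_{t=0} W_{g_{\ve_k}}\bigl((1+t)\Phi_k\bigr)\Bigr| \;\leq\; C\,\ve_k^2.
\]
Combining with the area lower bound gives $|\l_k|\,c_0 \leq C\,\ve_k^2$, which is \eqref{eq:lambdae2}.

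The main technical obstacle lies precisely in the last display: one must expand the Willmore integrand in the near-Euclidean metric $h_t$, differentiate in $t$ using the Christoffel expansion \eqref{eq:Gammage}, and verify term by term that the resulting expressions are bounded pointwise by $\ve_k^2$ times integrands that are $L^1(\Sp^2)$-bounded uniformly in $k$; the Willmore energy bound \eqref{eq:Wge} provides the control on $\int|A|^2$ needed to absorb all curvature contributions. As hinted in the introduction, an alternative is to exploit the divergence form of the Willmore equation from \cite{MR2}: this reduces the argument to a direct integral identity obtained by pairing the equation with the Euclidean position vector, whose curvature-dependent residue is manifestly of order $\ve_k^2$ thanks to the Noether-type vanishing in the flat case.
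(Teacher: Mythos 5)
Your proposal is correct and rests on the same basic mechanism as the paper's proof: both test the constrained Euler--Lagrange identity $\delta_{\bX}W_{g_{\ve_k}}(\Phi_k)=\l_k\,\delta_{\bX}\Area_{g_{\ve_k}}(\Phi_k)$ against the dilation field $\bx$, bound $\delta_{\bx}\Area_{g_{\ve_k}}(\Phi_k)$ below by a positive constant using $\nabla^{g_{\ve_k}}\bx=\Id+O(\ve_k^2)$ together with \eqref{eq:AreaBound}, and then show $\delta_{\bx}W_{g_{\ve_k}}(\Phi_k)=O(\ve_k^2)$. Where you genuinely diverge is in this second, harder estimate. The paper substitutes the divergence form \eqref{eq:WDiv} of the Willmore operator from \cite{MR2} into the first variation, integrates by parts, and checks that all flat-space contributions cancel algebraically (via $\star(\bn\wedge\partial_{x^1}\Phi)=\partial_{x^2}\Phi$, etc.), leaving only Christoffel and curvature terms of order $\ve_k^2$, which are then controlled by Cauchy--Schwarz, the Gauss equation, Gauss--Bonnet and the area bound. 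You instead push the dilation onto the ambient metric, writing $W_{g_{\ve_k}}\bigl((1+t)\Phi_k\bigr)=W_{h_t}(\Phi_k)$ with $h_t(y)=g_{\ve_k}((1+t)y)$ by diffeomorphism invariance plus invariance under constant conformal rescaling, so that the whole Willmore variation becomes a variation of the metric in the direction $\dot h_0=y^\a\partial_\a g_{\ve_k}=O(\ve_k^2)$; the flat-space vanishing is then automatic (for $g_0$ one has $h_t\equiv g_0$) and no integration by parts against $\Delta H$ is required. This is a conceptually cleaner route that never touches the fourth-order operator; the price is that you must still expand $H^2_{h,\Phi_k}\,dvol_{\Phi_k^*h}$ in $h$ and verify that the coefficients of $\dot h$ and $\nabla\dot h$ are controlled by $\int(1+|H|^2+|A|^2)\,dvol$, which is exactly the same package of bounds the paper invokes for its residual terms (note that, like the paper's own proof, you implicitly use the standing normalization \eqref{eq:RescQuant}, i.e.\ the energy and diameter bounds, which the lemma statement does not repeat). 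I see no gap.
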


\begin{proof}
Since  $(\Phi_k)$ are area-constrained Willmore immersions, for every variation vector field  $\bX$ on $\R^3$ we have that
\be\label{eq:deltaWArea}
\delta_{\bX} W_{g_{\ve_k}}(\Phi_k)=\lambda_k  \delta_{\bX} \Area_{g_{\ve_k}}(\Phi_k),
\ee
where $\delta_{\bX} W$ and $\delta_{\bX} \Area$ are the first variations of the Willmore and the Area functionals corresponding to the vector field $\bX$. Observe that the vector field corresponding to the dilations in $\R^3$ is the position vector field $\bx$, so the first variation of the euclidean Willmore functional in $\R^3$ with respect to $\bx$ is null: $\delta_{\bx} W_{g_0}=0$; on the other hand the first variation of  euclidean area with respect to the $\bx$ variation is easy to compute using the tangential divergence formula:
$$\delta_{\bx} \Area_{g_0}(\Phi)=-2\int_{\Sp^2} <\bH,\bx>_{g_0} dvol_{\bar{g_0}}=\int_{\Sp^2} \div_{\Phi,g_0} \bx \; dvol_{\bar{g_0}}=2 \Area_{g_0}(\Phi),$$ 
where $\div_{\Phi,g_0} $ is the tangential divergence on $\Phi(\Sp^2)$ with respect to the euclidean metric.  The two euclidean formulas give the well known fact that every area-constraint Willmore surface is actually a Willmore surface. 

In the present framework, the ambient metric $g_{\ve_k}$ is a perturbation of order $\ve_k^2$ of the euclidean metric $g_0$, so it is natural to expect that the Lagrange multiplier maybe does not vanish but at least is of order $\ve_k^2$. Let us prove it.
First of all, by the expansion of the Christoffel symbols \eqref{eq:Gammage} it follows that the covariant derivative in metric $g_{\ve_k}$ of the position vector field $\bx$ has the form
\be\label{nablax}
\nabla^{g_{\ve_k}} \vec{x}= \Id+O(\ve_k^2).
\ee
It follows that the tangential divergence of $\bx$ on $\Phi_k(\Sp^2)$ with respect of the metric $\bar{g}_k$ is $\div_{\Phi,g_{\ve_k}} \bx=2+O(\ve_k^2)$ and by the tangential divergence formula we obtain as before
$$
\delta_{\bx} \Area_{g_{\ve_k}}(\Phi)=-2\int_{\Sp^2} <\bH_{\Phi_k,g_{\ve_k}},\bx>_{g_{\ve_k}} dvol_{\bar{g_{\ve_k}}}=\int_{\Sp^2} \div_{\Phi_k,g_{\ve_k}} \bx \; dvol_{\bar{g_{\ve_k}}}=[2+O(\ve_k^2)] \Area_{g_{\ve_k}}(\Phi_k);
$$
recalling the uniform area bound given in \eqref{eq:AreaBound} we get that there exists $C>0$ such that
\be\label{eq:deltaxArea}
0\leq \frac{1}{C}\leq \delta_{\bx} \Area_{g_{\ve_k}}(\Phi)\leq C < \infty.
\ee
Now let us compute the variation of the Willmore functional with respect to the variation $\bx$:
\be\label{eq:deltaWgek1}
\d_{\bx} W_{g_{\ve_k}}(\Phi_k)=\int_{\Sp^2} <\bx,\bn>_{g_{\ve_k}} \left(\triangle_{\bar{g}_{\ve_k}}H+H |A^\circ|^2+H \Ric(\bn,\bn) \right) dvol_{\bar{g}_{\ve_k}}
\ee
where, of course, all the quantities are computed on $\Phi_k$ and with respect to the metric ${g}_{\ve_k}$. In order to continue the computations, it is useful to rewrite the first variation of $W$ in divergence form. Up to a reparametrization we can assume that $\Phi_k$ are conformal, so  that the following identity holds (see Theorem 2.1 in \cite{MR2}) 
\be\label{eq:WDiv}
\left[ \triangle_{\bar{g_{\ve_k}}} H\ \vec{n}+  \vec{H} |A^\circ|^2 - R^\perp _\Phi (T\Phi)\right] dvol_{\bar{g}_{\ve_k}} = D^*\left[\nabla H \vec{n}-\frac{H}{2} D \vec{n} +\frac{H}{2} \star_{g_{\ve_k}} (\vec{n}\wedge D^\perp \vec{n})\right]\quad
\ee
where $\vec{H}=H\bn$ is the mean curvature vector of the immersion $\Phi_k$, $\star_{g_{\ve_k}}$ is the Hodge operator associated to metric $g_{\ve_k}$, $D \cdot:=(\n_{\p_{x_1} \Phi_k}\cdot, \n_{\p_{x_2} \Phi_k}\cdot)$ and $D^\perp\cdot:=(-\n_{\p_{x_2} \Phi_k}\cdot,\n_{\p_{x_1} \Phi_k}\cdot)$ and $D^*$ is an operator acting on couples of vector fields $(\vec{V}_1,\vec{V}_2)$ along $(\Phi_k)_*(T\Sp^2)$ defined as 
$$D^*(\vec{V}_1,\vec{V}_2):= \n_{\p_{x_1} \Phi_k} \vec{V}_1+\n_{\p_{x_2} \Phi_k} \vec{V}_2.$$
Finally $R^\perp_{\Phi_k}(T\Phi_k):=(Riem(\vec{e_1},\vec{e_2})\vec{H})^\perp =\star_{g_{\e_k}}\left( \vec{n}\wedge Riem^h(\vec{e_1},\vec{e_2})\vec{H} \right)$, where $\vec{e}_i=\frac{\p_{x_i} \Phi}{|\p_{x_i} \Phi|}$ for $i=1,2$. 
\\Plugging \eqref{eq:WDiv} into  \eqref{eq:deltaWgek1} and integrating by parts we obtain
\begin{eqnarray}
\d_{\bx} W_{g_{\ve_k}}(\Phi_k)&=&\int_{\Sp^2} <-D\bx, \nabla H \vec{n}-\frac{H}{2} D \vec{n} +\frac{H}{2} \star_{g_{\ve_k}} (\vec{n}\wedge D^\perp \vec{n}) >_{g_{\ve_k}} dvol_{\Sp^2}\nonumber \\
&&+ \int_{\Sp^2}<\bx, R^\perp _\Phi (T\Phi_k)+\vec{H} \Ric(\bn,\bn) >_{g_{\ve_k}} dvol_{\bar{g}_{\ve_k}}.\label{eq:deltaWgek2}
\end{eqnarray}
Since the Riemannian curvature tensor of the metric $g_{\ve_k}$ is of order $O(\ve_k^2)$ and both the curvature terms are linear in $H$, using Schwartz inequality the integral in the second line can be estimated as
\be\label{eq:2line}
\int_{\Sp^2}<\bx, R^\perp _{\Phi_k} (T\Phi_k)+\vec{H} \Ric(\bn,\bn) >_{g_{\ve_k}} dvol_{\bar{g}_{\ve_k}}=O(\ve_k^2) \left(W_{g_{\ve_k}}(\Phi_k) \, \Area_{g_{\ve_k}}(\Phi_k)\right)^{1/2}= O(\ve_k^2).
\ee
The first line of the right hand side of \eqref{eq:deltaWgek1} can be written explicitely as 
\begin{eqnarray}
\int_{\Sp^2}&&<-\p_{x^1}\Phi_k-\vec{\Gamma}^{g_{\ve_k}}_{\a \b} (\p_{x^1}\Phi_k^\a) \Phi^\b\;, \; (\p_{x^1}H) \bn+\frac{H}{2}A_1^j (\p_{x^j} \Phi_k)+\frac{H}{2} A_2^j \star_{g_{\ve_k}} (\vec{n}\wedge \p_{x^j}\Phi_k) >_{g_{\ve_k}} dvol_{\Sp^2} \nonumber \\
&&+ \int_{\Sp^2} <-\p_{x^2}\Phi_k-\vec{\Gamma}^{g_{\ve_k}}_{\a \b} (\p_{x^2}\Phi_k^\a) \Phi^\b\;, \; (\p_{x^2}H) \bn+\frac{H}{2}A_2^j (\p_{x^j} \Phi_k)-\frac{H}{2} A_1^j \star_{g_{\ve_k}} (\vec{n}\wedge \p_{x^j}\Phi_k) >_{g_{\ve_k}} dvol_{\Sp^2}. \label{eq:line1}  
\end{eqnarray} 
Recalling that $\star_{g_{\ve_k}} (\vec{n}\wedge \p_{x^1}\Phi_k)=\p_{x^2}\Phi_k $ and $\star_{g_{\ve_k}} (\vec{n}\wedge \p_{x^2}\Phi_k)=-\p_{x^1}\Phi_k$ we obtain that all terms obtained doing the scalar product with $-\p_{x^1}\Phi_k$ in the first line, and with $-\p_{x^2}\Phi_k$ in the second line simplify and just the terms containing the Christoffel symbols remain; since $\Phi_k\subset B_{\g_{\ve_k}}(0,2)$ and the Christoffel symbols are of order $O(\ve_k^2)$ by \eqref{eq:Gammage}, \eqref{eq:line1} can be written as
\be \label{eq:line1a}
\int_{\Sp^2}-\sum_{i=1}^2<\vec{\Gamma}^{g_{\ve_k}}_{\a \b} (\p_{x^i}\Phi_k^\a) \Phi^\b\;, \; (\p_{x^i}H) \bn> dvol_{\Sp^2} +O(\ve_k^2)\int_{\Sp^2}|H_{\Phi_k,g_{\ve_k}}|\, |A_{\Phi_k,g_{\ve_k}}|\, dvol_{\bar{g}_{\ve_k}} ;
\ee
using Schwartz inequality of course the second summand can be bounded by 
\be\label{eq:line1aa}
O(\ve_k^2) \left(\int_{\Sp^2} |H_{\Phi_k,g_{\ve_k}}|^2 dvol_{\bar{g}_{\ve_k}}  \right)^{1/2} \left(\int_{\Sp^2} |A_{\Phi_k,g_{\ve_k}}|^2 dvol_{\bar{g}_{\ve_k}}  \right)^{1/2}= O(\ve_k^2),
\ee
where we used the Gauss equations, Gauss-Bonnet Theorem and the area bound \eqref{eq:AreaBound} to infer that
$$\int_{\Sp^2} |A_{\Phi_k,g_{\ve_k}}|^2 dvol_{\bar{g}_{\ve_k}} \leq C (W_{g_{e_k}}(\Phi_k)+1)\leq C_1.$$
In order to estimate the first integral of  \eqref{eq:line1a} we integrate by parts the derivative on $H$ and we recall \eqref{eq:Gammage}, obtaining 
\begin{eqnarray}
\int_{\Sp^2}-\sum_{i=1}^2<\vec{\Gamma}^{g_{\ve_k}}_{\a \b} (\p_{x^i}\Phi_k^\a) \Phi^\b\;, \; (\p_{x^i}H) \bn> dvol_{\Sp^2} = O(\ve_k^2) \int_{\Sp^2} (|H_{\Phi_k,g_{\ve_k}}|+|H_{\Phi_k,g_{\ve_k}}| \, |A_{\Phi_k,g_{\ve_k}}|) dvol_{\bar{g}_{\ve_k}} \nonumber \\ 
= O(\ve_k^2) \left(W_{g_{\ve_k}}(\Phi_k)\right)^{1/2} \left[ \left(\Area_{g_{\ve_k}}(\Phi_k)\right)^{1/2}+\left(\int_{\Sp^2} |A_{\Phi_k,g_{\ve_k}}|^2 dvol_{\bar{g}_{\ve_k}}\right)^{1/2} \right]=O(\ve_k^2).\label{eq:line1b}
\end{eqnarray}
Collecting  \eqref{eq:deltaWgek2},  \eqref{eq:2line},  \eqref{eq:line1}, \eqref{eq:line1a}, \eqref{eq:line1aa} and  \eqref{eq:line1b} we obtain that
$$\d_{\bx} W_{g_{\ve_k}}(\Phi_k)=O(\ve_k^2).$$
Combining the last equation with \eqref{eq:deltaxArea} and \eqref{eq:deltaWArea} we obtain that $\lambda_k=O(\ve_k^2)$ as desired.
\end{proof}

\section{The blow up analysis and the proof of the main theorem} \label{Sec:3}
\subsection{Existence of just one bubble and convergence} \label{SubSec:3.1}
\begin{lem}\label{lem:reparametrize}
Let $g_{\ve_k}$ be the metrics on $\R^3$ defined in \eqref{def:ge} having the expression \eqref{eq:expge} and let $(\Phi_k)$ be area-constrained Willmore immersions of $\Sp^2$ into $(\R^3, g_{\ve_k})$ satisfying \eqref{eq:RescQuant}; without loss of generality we can assume $\Phi_k$ to be conformal with respect to the euclidean metric $g_0$. Up to a rotation in the domain we can also assume that, for every $k \in \N$, the north pole $N\in \Sp^2$ is the maximum point of the quantity $|\n \Phi_k|^2+|\n^2 \Phi_k|$:
$$\mu_k:=|\n \Phi_k|_h^2(N)+|\n^2 \Phi_k|_h(N)=\max_{\Sp^2}|\n \Phi_k|_h^2+|\n^2 \Phi_k|_h, $$
where $h$ is the  standard round metric of $\Sp^2$ of constant Gauss curvature equal to one and $|\nabla \Phi_k|_h, |\nabla^2\Phi_k|_h$ are the norms evaluated in the $h$ metric.

Called $S\in \Sp^2$ the south pole and  $P:\Sp^2\setminus \{S\}\to \R^2$ the stereographic projection, consider the new parametrizations $\tilde{\Phi}_k$, in the sequel simply denoted with $\Phi_k$, defined by
$$
\tilde{\Phi}_k\left(P^{-1}(z)\right):=\Phi_k\left(P^{-1}\left(\frac{z}{\mu_k^{1/2}}\right)\right), \quad \forall z \in \R^2.
$$ 
Then $\tilde{\Phi}_k$, a priori just defined on $\Sp^2\setminus\{S\}$, extend to  smooth conformal immersions of $\Sp^2$ into $(\R^3,g_0)$ and  converge to a conformal parametrization of a round sphere  in $C^l(\Sp^2,h)$-norm, for every $l \in \N$.   
\end{lem}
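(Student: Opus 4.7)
The plan is to carry out a blow-up analysis for the fourth-order area-constrained Willmore system \eqref{eq:AreaConstrWk}, combined with $\varepsilon$-regularity for Willmore immersions, exploiting crucially the strict bound $W_g(\Phi_k)<8\pi-2\delta$ to rule out multiple bubbles. First I would observe that the M\"obius map $\phi_k(P^{-1}(z))=P^{-1}(z/\mu_k^{1/2})$ is conformal on $(\Sp^2,h)$, hence $\tilde{\Phi}_k=\Phi_k\circ\phi_k$ is again a conformal immersion with respect to the Euclidean metric $g_0$. The particular choice of $\mu_k^{1/2}$ is exactly the rescaling needed so that, writing $\tilde{\Phi}_k^* g_0 = e^{2\tilde{v}_k} h$, the log-conformal factor $\tilde{v}_k$ and the quantity $|\nabla \tilde{\Phi}_k|_h^2 + |\nabla^2 \tilde{\Phi}_k|_h$ are $O(1)$ at the north pole $N$ and uniformly bounded on every compact subset of $\Sp^2\setminus\{S\}$ (by a direct computation using the conformal factor of $\phi_k$). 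Together with $\tilde{\Phi}_k(\Sp^2)\subset B_{g_{\ve_k}}(0,3/2)$, this provides uniform $C^0$ control on the conformal factor on compacts of $\Sp^2\setminus\{S\}$.

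Next I would bootstrap to $C^l_{loc}(\Sp^2\setminus\{S\},h)$ convergence via elliptic regularity applied to \eqref{eq:AreaConstrWk}. Lemma \ref{lem:lagrOek} gives $\lambda_k=O(\ve_k^2)\to 0$, and the expansions \eqref{eq:expge}--\eqref{eq:Gammage} yield $g_{\ve_k}\to g_0$ smoothly on $B_{g_0}(0,2)$; in the conformal gauge, \eqref{eq:AreaConstrWk} becomes a uniformly elliptic quasilinear fourth-order system that is a smooth perturbation of the Euclidean Willmore equation. The key technical tool is the $\varepsilon$-regularity theorem for Willmore immersions of Kuwert-Sch\"atzle \cite{KS1}--\cite{KS} and Rivi\`ere \cite{Riv1}: whenever the Willmore energy on a small geodesic ball is below a universal threshold (smaller than $4\pi$ suffices), all higher derivatives are controlled. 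Here the assumption $W<8\pi-2\delta$ enters decisively: since each concentration point costs at least $4\pi$ of Willmore energy by Li-Yau \cite{LY}, at most one bubble can form, and the M\"obius rescaling is designed exactly to capture it at $N$. Consequently, outside a neighborhood of $N$ the residual Willmore energy on small balls stays below the $\varepsilon$-regularity threshold, and higher-order estimates follow uniformly on compact subsets of $\Sp^2\setminus\{S\}$. Arzel\`a-Ascoli and a diagonal argument yield a subsequence $\tilde{\Phi}_k\to\tilde{\Phi}_\infty$ in $C^l_{loc}(\Sp^2\setminus\{S\},h)$ for every $l$, where $\tilde{\Phi}_\infty$ is a smooth conformal Willmore immersion of $\Sp^2\setminus\{S\}$ into $(\Rtre,g_0)$ with bounded image and $W_{g_0}(\tilde{\Phi}_\infty)\leq 8\pi-\delta$; the normalization at $N$ prevents the limit from being degenerate.

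Finally I would remove the puncture at $S$ via a Willmore removable-singularity theorem (cf. \cite{KS1}), legitimate precisely because $W_{g_0}(\tilde{\Phi}_\infty)<8\pi$: the extension is a smooth Willmore immersion of $\Sp^2$ into $(\Rtre,g_0)$. By Bryant's classification of Willmore spheres in $\Rtre$, whose admissible energies are $4\pi$ and $4\pi k$ for $k\geq 4$, the bound $W<8\pi$ forces the limit energy to equal $4\pi$, and the equality case of the Li-Yau inequality then identifies $\tilde{\Phi}_\infty$ as a conformal parametrization of a round sphere in $\Rtre$. I expect the most delicate step to be the $\varepsilon$-regularity bootstrap: it must reconcile the (essentially Euclidean) Willmore bubbling analysis with the perturbed ambient metric $g_{\ve_k}$ and the non-vanishing Lagrange multiplier $\lambda_k$. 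The quantitative decay $\lambda_k=O(\ve_k^2)$ from Lemma \ref{lem:lagrOek}, together with the smooth convergence $g_{\ve_k}\to g_0$ from \eqref{eq:expge}, is essential to ensure that both perturbations are genuinely negligible at the bubble's scale, so that the sharp threshold constants of the Euclidean $\varepsilon$-regularity remain effective.
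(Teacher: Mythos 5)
Your overall strategy coincides with the paper's: M\"obius rescaling normalizing $|\n\Phi_k|^2_h+|\n^2\Phi_k|_h$, local compactness away from $S$ via $\varepsilon$-regularity, removal of the point singularity plus Li--Yau plus Bryant to identify the limit as a round sphere, and energy quantization below $8\pi$ to exclude a second bubble. However, two steps that carry the real weight of the argument are asserted rather than proved.

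First, the ``uniform $C^0$ control on the conformal factor on compacts of $\Sp^2\setminus\{S\}$'' does not follow from a direct computation with the conformal factor of the M\"obius map $\phi_k$: that map controls only the reparametrization, not the immersion. The pointwise bound $|\n\tilde\Phi_k|_h^2\leq C$ on $K$ gives an \emph{upper} bound on $e^{2u_k}$, but the required \emph{lower} bound (non-collapsing) is exactly the delicate point. The paper obtains it by H\'elein's moving-frame/Chern-coordinate construction, which bounds the oscillation $\|u_k-c_{k,K}\|_{L^\infty(K)}$ on each compact $K$, and then rules out $c_{k,K}\to-\infty$ by a separate rescaling argument that contradicts the normalization $|\n\tilde\Phi_k|_h^2(N)+|\n^2\tilde\Phi_k|_h(N)=1$ (note that this normalization alone does not prevent $|\n\tilde\Phi_k|(N)\to 0$, since the second-derivative term could carry the whole quantity). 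Without the two-sided bound on $u_k$, the $\varepsilon$-regularity estimates only control $e^{-l u_k}\n^l\tilde\Phi_k$ and the limit could a priori be a constant map, so this gap is not cosmetic.

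Second, the lemma asserts convergence in $C^l(\Sp^2,h)$ on the \emph{whole} sphere, whereas your argument stops at $C^l_{loc}(\Sp^2\setminus\{S\})$ convergence plus smoothness of the limit across $S$. Smooth extendibility of $\Phi_\infty$ over the puncture is a statement about the limit, not about the sequence: you must still exclude that $|\n\tilde\Phi_k|^2+|\n^2\tilde\Phi_k|$ blows up along $B^h_{\rho_k}(S)$ with $\rho_k\downarrow 0$. The paper's Step~b does this by performing a second rescaling centered at $S$, producing a second bubble carrying at least $4\pi$ of Willmore energy, which together with the first bubble violates $\limsup_k W_{g_{\ve_k}}(\Phi_k)<8\pi$. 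You invoke the ``one bubble only'' principle in passing, but the contradiction argument and the resulting upgrade to global $C^l(\Sp^2)$ convergence need to be carried out explicitly to prove the stated conclusion.
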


\begin{proof}

\emph{Step a:} there exists a smooth conformal parametrization $\Phi_\infty:\Sp^2 \to (\R^3,g_0)$ of a round sphere in $\R^3$ endowed with the euclidean metric $g_0$ such that, up to subsequences, $\tilde{\Phi}_k\to \Phi_\infty$ in $C^l_{loc}(\Sp^2\setminus \{S\})$-norm, for every $l\in \N$.
\\

\noindent
Denote by  $u_k$ the conformal factor associated to $\tilde{\Phi}_k$, i.e.
$$\tilde{\Phi}_k^*(g_0) =e^{2u_k} h \quad,$$
where $g_0$ is the euclidean metric in $\R^3$. Observe that, by construction,  for any  compact subset  of the form
$$K:=\Sp^2\setminus B_\delta^h(S) \quad  \text{for some } \delta>0\quad ,$$
there holds
\begin{equation}
\label{est1}
\sup_{k \in \N} \sup_K  \left( |\n \tilde{\Phi}_k|_h^2+|\n^2 \tilde{\Phi}_k|_h \right) < \infty \quad.
\end{equation}
Then, for every compact  there exists a constant $C_K$ depending just on  $K$ such that
 for every $x_0 \in K$ and every $\rho\in \left(0, \frac{dist(K,S)}{2}\right)$ it holds 
 $$\sup_{k\in \N} \sup_{B^h_{\rho}(x_0)}|\nabla^2 \tilde{\Phi}_k|^2\leq C_K\quad ,$$
where $B^h_{\rho}(x_0)$ is the ball of center $x_0$ and radius $\rho$ in the metric $h$. By the conformal invariance of the Dirichlet energy, called $\pi_{\tilde{\bn}_k}$ the projection  on the normal space to $\tilde{\Phi}_k$, we infer that for every $\varepsilon_0>0$ there exists $\rho_{\varepsilon_0,K}>0$ (small enough) depending just on $K$ and on $\varepsilon_0$ but not on  $k\in \N$ such that for every $\rho\in (0,\rho_{\varepsilon_0,K})$ and $x_0 \in K$ it holds
\begin{eqnarray}
\int_{B^h_\rho(x_0)} |\nabla \tilde{\bn}_k|^2_{\tilde{\Phi}_k^*(g_0) } dvol_{\tilde{\Phi}_k^*(g_0) } &=& \int_{B^h_\rho(x_0)} |\nabla \tilde{\bn}_k|^2_{h} dvol_{h }= \int_{B^h_\rho(x_0)} |\pi_{\tilde{\bn}_k} (\nabla^2 \tilde{\Phi}_k)|^2_{h} \, dvol_h\nonumber \\ 
&\leq&   \int_{B^h_\rho(x_0)} |\nabla^2 \tilde{\Phi}_k|^2_{h}  dvol_{h }\leq C_K \rho^2 \leq \varepsilon_0\quad .\label{est1a}
\end{eqnarray}
Taking $\varepsilon_0 \leq \frac{8\pi}{3}$, for any $x_0\in K$ and $\rho< \rho_{\varepsilon_0,K}$ we can apply the H\'elein moving frame method based on Chern construction of conformal coordinates (for more details see \cite{RivPCMI}, Section 3) and infer that, up to a  reparametrization of $\tilde{\Phi}_k$  on $B_{\rho}(x_0)$, called $\bar{u}_k$  the mean value of $u_k$ on $B^h_\rho(x_0)$, it holds
$$\|u_k-\bar{u}_k\|_{L^\infty(B^h_\rho(x_0))}\leq \tilde{C},$$
for some $\tilde{C}>0$ independent of $k\in \N$. Covering  $K$ by finitely many balls as above, the  connectedness of $K$ implies that any two balls of the finite covering are connected by a chain of balls of the same  covering and therefore there exists constants $c_{k,K} \in \R, k \in \N,$ such that
\begin{equation}\label{eq:ck}
\sup_{k\in \N}\|u_k-c_{k,K}\|_{L^\infty(K)}< \infty.
\end{equation}  
Observe that $\sup_{k \in \N}  c_{k,K} <+ \infty$; indeed, if $\limsup_k c_{k,K}=+\infty$ then $\limsup_k \Area(\tilde{\Phi}_k(K))=+\infty$ contradicting the area bound \eqref{eq:AreaBound} (here we use that $K$  has positive $h$-volume). Now let us consider separately the case $\sup_{k}|c_{k,K}|<\infty$ and $\liminf_{k} c_{k,K}=-\infty$ starting from the former.
\\

\emph{Case 1}: $\sup_{k}|c_{k,K}|<\infty$.  Estimate \eqref{eq:ck} yields a uniform bound on the conformal factors $u_k$ on the subset $K$. Since by assumption the immersions $\tilde{\Phi}_k$  are area-constrained Willmore immersions satisfying \eqref{est1a}, then by $\varepsilon$-regularity ($\varepsilon$-regularity for Willmore immersions was first proved by Kuwert and Sch\"atzle in \cite{KS1}. Here we use the $\varepsilon$-regularity theorem proved by Rivi\`ere (see Theorem I.5 in \cite{Riv1}; see also Theorem I.1 in \cite{BRQ}); to this aim observe that the $\varepsilon$-regularity theorem was stated for \emph{Willmore immersions}, but the proof can be repeated  verbatim to \emph{area-constrained Willmore immersions in metric $g_{\e_k}$}: indeed the Lagrange multiplier $\lambda \vec{H}$ and the Riemannian terms are lower order terms that can be absorbed in the already present error terms $\vec{g}_1,\vec{g}_2$ in the proof of  Theorem I.5 at pp. 24-26 in \cite{Riv1}. Of course $\varepsilon$-regularity is a consequence of the ellipticity of the equation.) we infer that for every $l\in \N$ there exists $C_{l}$ such that  
$$|e^{-l\, u_k} \nabla^l \tilde{\Phi}_k | _{L^\infty\left(B^h_{\rho/2}(x_0)\right)}\leq C_l \left( \int_{B^h_\rho(x_0)} |\nabla \tilde{\vec{n}}_k|^2_h  dvol_h +1 \right)^{\frac{1}{2}}\leq \hat{C}_l$$
and therefore, by the assumed uniform bound on $|u_k|$ and by covering $K$ by finitely many balls we get that 
\be\label{eq:epsReg}
\sup_{k \in \N} |\nabla^l \tilde{\Phi}_k | _{L^\infty(K)} < \infty \quad \forall l \in \N \quad.
 \ee
By Arzel\'a-Ascoli Theorem and by the estimate on the Lagrange multipliers given in Lemma \ref{lem:lagrOek}, up to subsequences, the maps $\tilde{\Phi}_k$ converge in $C^l(K)$ norm, for every $l\in \N$, to a limit Willmore immersion $\tilde{\Phi}_\infty$ of $K$ into $(\R^3,g_0)$; repeating the above argument to $K=\Sp^2\setminus B_\delta^h(S)$, for every $\delta>0$, we get that, up to subsequences, the maps $\tilde{\Phi}_k$ converge in $C^l_{loc}(\Sp^2\setminus\{S\})$ norm, for every $l\in \N$, to a limit Willmore immersion $\Phi_\infty:\Sp^2\setminus\{S\}\to \R^3$ is a smooth  Willmore conformal immersion with finite area and $L^2$-bounded second fundamental form, therefore by Lemma A.5 in \cite{Riv2} (let us mention that this result  was already present in \cite{MS}; see also \cite{KL}) the map $\Phi_\infty$ can be extended up to the south pole $S$ to a possibly branched immersion; i.e. the south pole $S$ is a possible branch point for  $\Phi_\infty$ and the following expansion around $S$ holds
\be\label{eq:branch}
(C-o(1)) |z|^{n-1} \leq \left| \frac{\partial \Phi_\infty}{\partial z}\right| \leq (C+o(1)) |z|^{n-1} 
\ee
where $z$ is a complex coordinate around the south pole and $n-1$ is the branching order. We claim that the  branching order is $0$, or in other words that $\Phi_\infty$ is unbranched; indeed, by the strong convergence of $\tilde{\Phi}_k$ to ${\Phi}_\infty$ and the smooth convergence of $g_{\e_k}$ to the euclidean metric $g_0$ we have that 
\be\label{eq:Phiinf<8pi}
W_{g_0}(\Phi_\infty)\leq \liminf_k W_{g_{\e_k}} (\tilde{\Phi}_k)<8\pi;
\ee
therefore, by the Li-Yau inequality \cite{LY}, we get that $n-1=0$, i.e. $\Phi_\infty$ is an immersion also at the south pole $S$. Since $\Phi_\infty$ is a smooth Willmore immersion of $\Sp^2$ into $\R^3$ with energy less than $8\pi$, by the classification of Willmore spheres by Bryant \cite{Bry}, $\Phi_\infty$ is a smooth conformal parametrization of a  round sphere in $\R^3$.
\\

\emph{Case 2}: $\liminf_k c_{k,K}=-\infty$, can not happen. In this case, up to subsequences, we have that $\tilde{\Phi}_k(K)\to \bar{x}\in M$ in Hausdorff distance sense. Consider then the rescaled immersions 
\be\label{eq:hatphik}
\hat{\Phi}_{k}:=e^{-c_{k,K}} \tilde{\Phi}_k
\ee
of $K$ 
and observe that by construction  $\sup_k |\hat{u}_{k,K}|<\infty$, where
  $\hat{u}_{k,K}$ is the conformal factor of $\hat{\Phi}_k$. Moreover, since the integrals appearing in \eqref{est1a} are invariant under rescaling, estimate \eqref{est1a} holds for $\hat{\Phi}_k$ as well. Therefore, up to a diagonal extraction, $\hat{\Phi}_k\to \Phi_\infty$ in  $C^l_{loc}(\Sp^2\setminus \{S\})$-norm. In particular $\tilde{\Phi}_k\to 0$ in  $C^2_{loc}(\Sp^2\setminus \{S\})$-norm, which contradicts the fact that 
$$|\n \tilde{\Phi}_k|_h^2(N)+|\n^2 \tilde{\Phi}_k|_h(N)=1.$$  
  


\emph{Step b}: $\tilde{\Phi}_k\to \Phi_\infty$ in $C^l(\Sp^2)$, for every $l\in \N$; namely the convergence of \emph{Step a} is \emph{on the whole} $\Sp^2$. 
\\Observe that if there exists $\bar{\rho}>0$ such that $\sup_k \sup_{B^h_{\bar{\rho}}(S)} |\nabla \tilde{\Phi}_k|^2+  |\nabla^2 \tilde{\Phi}_k|<\infty$, then in \emph{Step a} we can choose as compact subset $K$ the whole $\Sp^2$ and the claim of  \emph{Step b} follows by the same arguments of \emph{Step a}. So assume by contradiction that there exists a sequence $\rho_k\downarrow 0$ such that, called 
$$\bar{\mu}_k:= \sup_{B^h_{\rho_k}(\bar{x})}  |\nabla \tilde{\Phi}_k|^2+  |\nabla^2 \tilde{\Phi}_k|,$$
 one has 
 $$\limsup_k \bar{\mu}_k=+\infty.$$ 
 By a small rotation in the domain $\Sp^2$ we can assume that, for every $k \in \N$, the maximum of  $|\nabla \tilde{\Phi}_k|^2+  |\nabla^2 \tilde{\Phi}_k|$ on $B^h_{\rho_k}(S)$ is attained at the south pole $S$ and that, up to subsequences in $k$, it holds
\be\label{eq:poloS}
\lim_k \bar{\mu}_k :=\lim_k |\nabla \tilde{\Phi}_k|^2(S)+  |\nabla^2 \tilde{\Phi}_k|(S) =+\infty.
\ee
Analogously to above, called $P_N:\Sp^2\setminus\{N\}\to \R^2$ the stereographic projection centered at the north pole $N$,  we consider the reparametrized immersions 
$$
\bar{\Phi}_k\left(P^{-1}_N(z)\right):=\tilde{\Phi}_k\left(P^{-1}_N\left(\frac{z}{\bar{\mu}_k^{1/2}}\right)\right).
$$ 
Observe that, in this way, the compact subsets $K$'s  considered above are shrinking   towards the north pole $N$ and, by the arguments above, their $\bar{\Phi}_k$-images are converging to a round sphere; repeating the arguments above to compact subsets this time containing the south pole $S$ and  avoiding the north pole $N$ we infer that, up to subsequences, $\bar{\Phi}_k$ (or a further rescaled of it) converges smoothly, away the north pole $N$, to a round sphere; namely a second bubble. Combining the bubble formed in \emph{Step a} and this second bubble, since each bubble contributes $4\pi$ of Willmore energy, we infer that
\be\label{eq:2bubble}
\limsup_k W_{g_{\e_k}}({\Phi}_k)\geq 8 \pi \quad,
\ee
contradicting the assumption \eqref{eq:RescQuant}. This concludes the proof of the \emph{Step b} and of the lemma .
\\
\end{proof}

\subsection{Expansion of the equation}
Recalling that  $\Phi_k:\Sp^2 \hookrightarrow (\R^3,g_{\ve_k})$ is a  smooth  immersion satisfying the area-constrained Willmore equation in metric $g_{\ve_k}$, and that $g_{\ve_k}$ smoothly converge to the euclidean metric $g_0$, in the present  section we expand this differential equation with respect to $\ve_k$. Without loss of generality we can assume that $\Phi_k$ is conformal with respect to the metric $g_{\ve_k}$. We will see that curvature terms appear at $\ve_k^2$ order while the derivatives of the curvature appear at  $\ve_k^3$ order. 
\\ \emph{From now on, in order to make the notation a bit lighter, we replace  $\ve_k$ by $\ve$.}
\\Recall that the area-constrained Willmore equation in metric $g_{\ve}$ has the following form
\be\label{eq:AConsWill}
\triangle_{\bar{g}_{\ve}} H_{\ve} +H_{\ve} \vert A^\circ_{{\ve}}\vert_{\bar{g}_{\ve}}^2+ \Ric_{g_{\ve}}(\vec{n}_{\ve}, \vec{n}_{\ve}) H_{\ve} = \lambda_{\ve} H_{\ve} \quad.
\ee
Since $\triangle_{\bar{g}_{\ve}} =\frac{2}{\vert \nabla \Phi_{\ve} \vert_\gve^2}\Delta$, where $\Delta$ is the flat laplaciain in $\R^2$, multiplying  equation \eqref{eq:AConsWill} by $\frac{\vert \nabla \Phi_{\ve} \vert_\gve^2}{2}$, we get
\be
\label{em}
\Delta  H_{\ve}  + \frac{\vert \nabla \Phi_{\ve} \vert_{g_{\ve}}^2}{2}H_{\ve} \vert A^\circ_{{\ve}}\vert_{\bar{g}_{\ve}}^2+\frac{\vert \nabla \Phi_{\ve} \vert_\gve^2}{2}H_{\ve} \Ric_{g_{\ve}}(\vec{n}_{\ve}, \vec{n}_{\ve})  = \lambda_{\ve} \frac{\vert \nabla \Phi_{\ve} \vert_\gve^2}{2}H_{\ve}\quad.
\ee
First of all, recalling that $H_{\ve} = \frac{g_{\ve}\left( \triangle_{\bar{g}_{\ve}} \Phi_{\ve} , \vec{n}_{\ve}\right)}{2}$, we expand $H_{\ve}$ as
\be\label{eq:He1}
H_{\ve} =\frac{1}{\vert \nabla \Phi_{\ve} \vert_{g_{\ve}}^2} ({g_{\ve}})_{\a \b}  \triangle  \Phi_{\ve}^\a \sqrt{\vert g_{\ve}\vert} g_{\ve}^{\b \gamma}  (\vec{\nu}_{{\ve}})_ {\gamma} = \frac{\sqrt{\vert g_{\ve}\vert}}{\vert \nabla \Phi_{\ve} \vert_{g_{\ve}}^2}  \triangle  \Phi_{\ve}^\a  \vec{\nu}_{{\ve}\a}
\ee
where $\vec{\nu}_{\ve}$ is the inward pointing unit normal with respect to $g_0$. Using (\ref{eq:expge}) and (\ref{eq:detge}), we get 
\begin{eqnarray}
\vert \nabla \Phi_{\ve} \vert_\gve^2&=& \vert \nabla \Phi_{\ve} \vert^2 + \frac{{\ve}^2}{3} R_{\a \b \g \eta}(p_k) \Phi_{\ve}^\b \Phi_{\ve}^\g \langle \nabla \Phi_{\ve}^\a,  \nabla \Phi_{\ve}^\eta \rangle \nonumber \\
&& + \frac{{\ve}^3}{6} R_{\a \b  \g \eta ,\mu }(p_k) \Phi_{\ve}^\b \Phi_{\ve}^\g \Phi_{\ve}^\mu \langle \nabla \Phi_{\ve}^\a,  \nabla \Phi_{\ve}^\eta \rangle + O({\ve}^4) \quad, \nonumber
\end{eqnarray}
so that
\begin{eqnarray}
\frac{1}{\vert \nabla \Phi_{\ve} \vert_\gve^2}&=&\frac{1}{\vert \nabla \Phi_{\ve}\vert^2}   \Big(1 - \frac{{\ve}^2}{3\vert \nabla \Phi_{\ve}\vert^2} R_{\a \b \g \eta}(p_k) \Phi_{\ve}^\b \Phi_{\ve}^\g \langle \nabla \Phi_{\ve}^\a,  \nabla \Phi_{\ve}^\eta \rangle \nonumber \\
&&  -\frac{{\ve}^3}{6 \vert \nabla \Phi_{\ve}\vert^2} R_{\a \b  \g \eta ,\mu }(p_k) \Phi_{\ve}^\b \Phi_{\ve}^\g \Phi_{\ve}^\mu \langle \nabla \Phi_{\ve}^\a,  \nabla \Phi_{\ve}^\eta \rangle + O({\ve}^4)  \Big)\quad, \label{eq:1/nablaPhi2}
\end{eqnarray}
moreover
\be\label{eq:|ge|}
\sqrt{\vert \gve\vert}  = 1 - \frac{{\ve}^2}{6} \Ric_{\a \b}(p_k) \Phi_{\ve}^\a \Phi_{\ve}^\b- \frac{{\ve}^3}{6} \Ric_{\a \b,\gamma}(p_k) \Phi_{\ve}^\a \Phi_{\ve}^\b \Phi_{\ve}^\g + O({\ve}^4).
\ee
Combining \eqref{eq:He1} with  \eqref{eq:1/nablaPhi2} and \eqref{eq:|ge|} we can write
\be
\label{exp1}
 H_{\ve} = \frac{ \triangle  \Phi_{\ve}^\a \vec{\nu}_{{\ve} \a} }{\vert \nabla \Phi_{\ve}\vert^2}\left(1 + {\ve}^2 S_{\ve} + {\ve}^3 T_{\ve} + O({\ve}^4)\right) \quad,
 \ee
 where
 $$
 S_{\ve}:=  - \frac{1}{3\vert \nabla \Phi_{\ve}\vert^2} R_{\a \b \g \eta}(p_k) \Phi_{\ve}^\b \Phi_{\ve}^\g \langle \nabla \Phi_{\ve}^\a,  \nabla \Phi_{\ve}^\eta \rangle- \frac{1}{6} Ric_{\a \b}(p_k) \Phi_{\ve}^\a \Phi_{\ve}^\b 
 $$
  and
  $$T_{\ve}:=   - \frac{1}{6\vert \nabla \Phi_{\ve}\vert^2} R_{\a \b \g \eta,\mu}(p_k) \Phi_{\ve}^\b \Phi_{\ve}^\g \Phi_{\ve}^\mu\langle \nabla \Phi_{\ve}^\a,  \nabla \Phi_{\ve}^\eta \rangle - \frac{1}{6} Ric_{\a\b,\g}(p_k) \Phi_{\ve}^\a \Phi_{\ve}^\b \Phi_{\ve}^\g .
$$
The combination of  \eqref{eq:|ge|} and   \eqref{exp1}  gives
\be
\label{exp3}
\Ric_{g_{\ve}}(\vec{n}_{\ve}, \vec{n}_{\ve}) H_{\ve}= {\ve}^2  \frac{ \triangle  \Phi_{\ve}^\a  \vec{\nu}_{{\ve} \a} }{\vert \nabla \Phi_{\ve}\vert^2}  \Ric_{g}(p_k)(\vec{\nu}_{{\ve} }, \vec{\nu}_{{\ve} })  + O({\ve}^4). 
\ee
Finally, using  (\ref{exp1}), (\ref{exp3}) and (\ref{eq:lambdae2}), we expand (\ref{em}) up to ${\ve}^2$-order (the term  $H_{\ve} \vert A^\circ_{{\ve}}\vert_{\bar{g}_{\ve}}^2$  will be expanded in the next subsection) as follows
\begin{equation}
\label{expeq}
\begin{split}
& \Delta H_{\ve} + \frac{\vert \nabla \Phi_{\ve} \vert_\gve^2}{2} H_{\ve} \vert A _{{\ve}} ^\circ \vert_{\bar{g}_{\ve}}^2+ \frac{\vert \nabla \Phi_{\ve} \vert_\gve^2}{2} H_{\ve}  \Ric_{g_{\ve}}(\vec{n}_{\ve}, \vec{n}_{\ve})  - \lambda_{\ve} H_{\ve} \frac{\vert \nabla \Phi_{\ve} \vert_\gve^2}{2} =\\
& \Delta \left( \frac{ \Delta  \Phi_{\ve}^\a  \vec{\nu}_{{\ve} \a}}{\vert \nabla \Phi_\ve\vert^2} \right) + {\ve}^2 \left(\Delta \left( \frac{ \Delta  \Phi_{\ve}^\a  \vec{\nu}_{{\ve} \a}}{\vert \nabla \Phi_\ve \vert^2}  \right) S_{\ve} +2 \left\langle\nabla\left( \frac{ \Delta  \Phi_{\ve}^\a  \vec{\nu}_{{\ve} \a}}{\vert \nabla \Phi_\ve\vert^2}\right)   ,\nabla S_{\ve} \right\rangle + \frac{ \Delta  \Phi_{\ve}^\a  \vec{\nu}_{{\ve} \a}}{\vert \nabla \Phi_\ve\vert^2}  \Delta S_{\ve} \right)\\
&+ \frac{\vert \nabla \Phi_{\ve} \vert_\gve^2}{2} H_{\ve} \vert A^\circ_{{\ve}}\vert_{\bar{g}_{\ve}}^2 +  \frac{{\ve}^2}{2}   \Delta  \Phi_{\ve}^\a  \vec{\nu}_{{\ve} \a}  \Ric_{g}(p)(\vec{\nu}_{{\ve} }, \vec{\nu}_{{\ve} })  - \frac{\lambda_{\ve}}{2}  \Delta  \Phi_{\ve}^\a  \vec{\nu}_{{\ve} \a} +  o({\ve}^2) . 
\end{split}
\end{equation}

\subsection{Approximated solutions to the area-constrained Willmore equation}
\label{approx}
In this section we solve (\ref{expeq}) up to the $\ve^2$ order. For this let  $\omega$ be the inverse of the stereographic projection with respect to the north pole and notice that  $\omega$ is  a solution of the equation when $\ve=0$. We make the ansatz of looking  for a solution up to the order $\ve^2$ of the form $\omega + \ve^2 \rho$, for some function $\rho$. Since $ \vert A^\circ\vert^2=0$ for $\omega$, it is clear that
\be
\label{exp2}
 H_\ve \vert A^\circ_{\ve}\vert_{\bar{g}_{\ve}}^2 =O(\ve^4);
 \ee
in particular, since for our arguments it is enough to expand the equation up to $\ve^3$ order, this term will never play a role and therefore it will be neglected.
\\Observing that $\frac{\Delta \omega^\a \omega_\a}{|\nabla \omega|^2}\equiv-1$, equation \eqref{expeq} implies that $\rho$ must solve 
\be
\label{ee2}
\begin{split}
 L_{\omega}(\rho) &= \Delta\left( \frac{1}{3\vert \nabla \omega\vert^2} R_{\a \b \g \mu}(p_k) \omega^\b \omega^\g \langle \nabla \omega^\a,\nabla \omega^\mu \rangle+ \frac{1}{6} Ric_{\a \b}(p_k) \omega^\a \omega^\b  \right) \\
 &\quad -  \frac{\vert \nabla \omega\vert^2}{2}Ric_{\a \b}(p_k) \omega^\a \omega^\b  + \frac{\lambda_{\ve}}{2\ve^2} \vert \nabla \omega\vert^2\quad , 
 \end{split}
 \ee
where $L_\omega$ is the linearized  Willmore operator  at $\omega$, see Appendix \ref{aL} for more details. Using the identity
\be
\label{romega}
 \langle \nabla \omega^\a,\nabla \omega^\b \rangle = (\delta_{\a \b}-\omega^\a \omega^\b) \frac{\vert \nabla\omega \vert^2}{2},
 \ee 
equation (\ref{ee2}) reduces to 
\be
\begin{split}
 L_{\omega}(\rho) &=  \frac{1}{3}\Delta\left( Ric_{\a \b}(p_k) \omega^\a \omega^\b  \right)-  \frac{\vert \nabla \omega\vert^2}{2}Ric_{\a \b}(p_k) \omega^\a \omega^\b  + \frac{\lambda_{\ve}}{2\ve^2} \vert \nabla \omega\vert^2  \\
 &= \left(-  Ric_{\a \b}(p_k) \omega^\a \omega^\b + \left(\frac{\lambda_{\ve}}{2\ve^2} + \frac{Scal(p_k)}{3}\right) \right)\vert \nabla \omega\vert^2.
 \end{split}
 \ee
Hence, we easily check that 
\be
\label{defro}
\rho_{\ve}= \frac{1}{3}\Ric_{\a \b}(p_k) \omega^\b + \frac{\lambda_{\ve}}{\ve^2} f(r) \omega 
\ee
with
$$f(r)= \frac{ r^2\ln\left(\frac{r^2}{1+r^2}\right)-1-\ln\left(1+r^2\right)}{1+r^2}.$$
where $r^2=x^2 +y^2$, is the desired function. Moreover it is not difficult to  check that this perturbed $\omega$ satisfies the conformal conditions up to $\ve^2$ order,  that is to say

\be
\label{eqr0}
\left\{\begin{array}{l}g_{\ve}( (\omega +{\ve}^2 \rho_{\ve})_x,(\omega +{\ve}^2 \rho_{\ve})_x)  -   g_{\ve}( (\omega +{\ve}^2 \rho_{\ve})_y,(\omega +{\ve}^2 \rho_{\ve})_y) = O({\ve}^3) \\ g_{\ve}( (\omega +{\ve}^2 \rho_{\ve})_x,(\omega +{\ve}^2 \rho_{\ve})_y) = O({\ve}^3) \end{array}\right. ;
\ee
a way to prove it is to  use  the expansion of the metric with the fact that   in dimension $3$ one has 
\be\nonumber
R_{\a \b \g \mu} =(g_{\a \g }\mathrm{Ric}_{\b \mu }-g_{\a \mu}\mathrm{Ric}_{\b \g} + g_{\b \mu}\mathrm{Ric}_{\a \g} - g_{\b \g}\mathrm{Ric}_{\a \mu}) +\frac{\hbox{Scal}}{2} (g_{\a \mu}g_{\b \g}-g_{\a \g }g_{\b \mu}).
\ee

\subsection{Proof of Theorem \ref{thm2}} \label{subsec:3.4}

Let us briefly recall the setting. Let $\Phi_k : \Sp^2 \hookrightarrow (M,g)$ be  conformal Willmore immersions satisfying 
\begin{eqnarray} 
\ve&:=&\diam_g(\Phi_k(\Sp^2)) \to 0 \label{eq:diamto0Pf},\\
W_g(\Phi_k)&:=& \int_{\Sp^2} |H_{g,\Phi_k}|^2 dvol_{\bar{g}_k}\leq 8 \pi-2\delta,\quad \text{for some } \delta>0 \text{ independent of } k \label{eq:HpWgPf}.
\end{eqnarray}
Thanks to Lemma \ref{lem:lagrOek}, we associate to $\Phi_k$ the new immersion  $\Phi^\ve : \Sp^2 \hookrightarrow (\R^3, g_{\ve})$,  where $
g_{\ve}(y)(u,v):=g(\ve y)(\ve^{-1}u, \ve^{-1} v)$, 
which satisfies the area-constrained Willmore equation
\be\label{eq:AreaConstrWkPf}
\triangle_{\bar{g}_{\ve}}H_{g_{\ve},\Phi^\eps}+H_{g_{\ve},\Phi^\eps} \; |A^\circ_{g_{\ve},\Phi^\eps}|_{\bar{g}_{\ve}}^2+H_{g_{\ve},\Phi^\eps} \Ric_{g_{\ve}}(\bn_{g_{\ve},\Phi^\eps}, \bn_{g_{\ve},\Phi^\eps})=\lambda_\eps H_{g_{\ve},\Phi^\eps}
\ee
with  $\lambda_\eps =O(\ve^2)$. 
Moreover by Lemma \ref{lem:reparametrize} we know that,  up to conformal reparametrizations and up to subsequences,  we have
$$\Phi^{\ve} \rightarrow \Phi \hbox{ in } C^2(\Sp^2) $$
where $\Phi$ is a conformal diffeomorphism of $\Sp^2$. Clearly, up to reparametrizing our sequence, we can assume that $\Phi=Id$. In the following we perform all the computations in the chart given by the stereographic projection (which is conformal); we denote by $\omega$ the inverse of the stereographic projection.\\

Before proceeding with the proof, we need to make a small adjustment to the immersions. We claim that there exist $a^{\ve}\in \R^2$, $b^{\ve}\in \R^2$, $R^\eps\in SO(3)$  and $z^{\ve}\in \C$  satisfying  
\be 
\label{pest}
a^{\ve}=o(1), \quad  b^{\ve}=o(1), \quad \vert Id -R^\eps\vert =o(1), \quad z^{\ve}=o(1) \quad  \quad,
\ee
 such that,  up to replacing $\Phi^{\ve}$ by $\Phi^{\ve}(a^{\ve} + z^{\ve}\, . \,)$, and  $ \Omega^{\ve} = \omega^{\ve} +\ve^2 \rho^{\ve}$, where $\rho^{\ve}$ is given by (\ref{defro}),  by $R^\eps [ \omega( \cdot + b^{\ve}) + \ve^2 \rho^{\ve}(\cdot+b_\ve)] $ we get 
\be 
\label{pest2}
\begin{split}
\vert \nabla \Phi^{\ve} \vert  \hbox{ and }\vert \nabla \Omega^\eps \vert \hbox{ are maximal at } 0,& \quad  Vect\{ \Phi^{\ve}_x(0) ,  \Phi^{\ve}_y(0) \}=Vect\{ \Omega^{\ve}_x(0) ,   \Omega^{\ve}_y(0) \},\quad  \\
&\text{and } \Phi^{\ve}_x(0)= \Omega^{\ve}_x(0). 
\end{split}
\ee
This is a simple consequence of the $C_{loc}^2(\R^2)$ convergence of $\Phi^{\ve}$ to $\omega$. Indeed, we choose first $a^\eps$ and $b^\eps$ such that $\vert \nabla \Phi^{\ve} \vert$ and $\vert \nabla \Omega^\eps \vert $ are maximal at $0 $, then $R^\eps$ such that the tangent plane of $\Phi^\eps$ and $R^\eps \Omega^\eps$ coincide at $0$ and finally we find $z_\eps$ in order to adjust the first derivatives. \\
Therefore from now on we will assume that \eqref{pest2} is satisfied.\\

Now we  prove Theorem \ref{thm2}.  We set
$$\Phi^{\ve}=\Omega^{\ve} +r^{\ve}$$
for some function $r^\ve$ and,  thanks to the computations of Section \ref{approx}, we see that  $r^{\ve} $ satisfies
\be
\label{eqr1}
L_\omega(r^{\ve}) = O\left(\ve^3\right)  +o\left(  \vert \n r^{\ve}  \vert + \vert \n^2 r^{\ve}\vert +\vert \n^3 r^{\ve}\vert +\vert \n^4 r^{\ve}\vert  \right) .
\ee

Moreover, combining \eqref{eqr0} and (\ref{pest2}), we get that 
\be
\label{eqr3}
 g^\ve( \nabla r^{\ve} ,\nabla r^{\ve} )(0) =O(\ve^6).
\ee
Indeed the error terms of  $r^\eps_x(0)$ and  $r^\eps_y(0)$ lie in the plane generated by $\Omega^\eps_x(0)$ and $\Omega^\eps_y(0)$. So it suffices to estimate their projection against $\Omega^\eps_x(0)$ and $\Omega^\eps_y(0)$. But this one vanish up to the $\eps^3$ order thanks to (\ref{eqr0}).  Observe that we also have 
\be
\label{eqr4}
g^\ve (\nabla^2 r^{\ve} , \nabla \omega^{\ve} )(0) =O(\ve^3). 
\ee

{\bf Claim : $\sup_{\R^2}  \vert \n r^{\ve}  \vert + \vert \n^2 r^{\ve}\vert +\vert \n^3 r^{\ve}\vert +\vert \n^4 r^{\ve}\vert  =O(\ve^3)$}.\\

{\it Proof of the Claim:} let us denote $\mu_{\ve} :=  \vert \n r^{\ve}  \vert + \vert \n^2 r^{\ve}\vert +\vert \n^3 r^{\ve}\vert +\vert \n^4 r^{\ve}\vert $ and    assume by contradiction  that $\lim \frac{\ve^3}{\mu_{\ve}} =0$. Up to a reparametrization we can assume that this $\sup$ is achieved  at some  point $z_\ve$ which is confined  in a fixed  compact subset of $\R^2$. In fact, we can do a reparametrization in order to make this  requirement satisfied  before performing the  adjustments of the previous page. Then we set
$$\tilde{r}_{\ve}= \frac{r_{\ve}  - r_{\ve}(0)}{\mu^{\ve}} . $$
 By construction, $\tilde{r}^\ve$ is bounded in $C^4$-norm on every compact subset of $\R^2$ and therefore, by Arzel\'a-Ascoli's Theorem, it converges up to subsequences to a limit function  $\tilde{r}$ in $C^3_{loc}$-topology.   Thanks to (\ref{eqr1}), $\tilde{r}$ is a solution of the linearized equation (\ref{lin})   and, recalling (\ref{eqr3})-(\ref{eqr4}), it  satisfies  (\ref{linc})  with $\nabla \tilde{r} (0) =0$ and $\langle\nabla^2 \tilde{r} , \nabla \omega\rangle(0)=0$. Then, applying  Lemma \ref{coin}, we get that $\nabla \tilde{r} \equiv 0$, which is in contradiction with the fact that  $\vert \n \tilde{r}  \vert + \vert \n^2 \tilde{r} \vert +\vert \n^3 \tilde{r} \vert +\vert \n^4 \tilde{r}\vert =1$ at some point at finite distance. This proves the claim. \hfill$\square$\\

\noindent Mimicking the proof of the claim above, one can prove that  setting 

$$\tilde{r}_{\ve}= \frac{r_{\ve}  - r_{\ve}(0)}{\ve^3}, $$
then, up to  subsequences,  $\tilde{r}_{\ve}$ converges to a function $\tilde{r}$ in $C^{3}_{loc}(\R^2)$ which, using  (\ref{em}), (\ref{exp1}) and (\ref{exp3}),  satisfies the linearized Willmore equation
$$L_\omega(\tilde{r}) = \Delta\left( \frac{1}{6\vert \nabla \omega\vert^2} R_{\a \b \g \mu, \nu}(p_k) \omega^\b \omega^\g \omega^\nu \langle \nabla \omega^\a,  \nabla \omega^\mu \rangle + \frac{1}{6} \Ric_{\a \b, \gamma}(p_k) \omega^\a \omega^\b \omega^\gamma \right).$$
Recalling identity (\ref{romega}), the last equation can be rewritten as follows
$$L_\omega(\tilde{r}) = \Delta\left( \frac{1}{12} \Ric_{\a \b,\g}(p_k) \omega^\a \omega^\b \omega^\g \right).$$
Finally, integrating this relation against the $\omega^\a$, for $\a=1,\dots,3$, which are solutions of the linearized equation, we get
$$\int_{\R^2} \Delta\omega \, \left( \frac{1}{12} \Ric_{\a \b,\g}(p_k) \omega^\a \omega^\b \omega^\g \right) \, dz =0.$$
Let us note that the integration by parts above has been possible thanks to the decay of $\omega$ and its derivatives at infinity. The last identity  gives
$$\int_{\R^2} \left( \Ric_{\a \b,\g}(p_k) \omega^\a \omega^\b \omega^\g \right) \omega \frac{\vert \nabla \omega \vert^2}{2} \, dz =0.$$
Then, by a  change of variable, we get
$$\int_{\Sp^2} \left( \Ric_{\a \b,\g}(p_k)(p_k) y^\a y^\b y^\g \right) y  \, dvol_{h} =0,$$
where $h$ is the standard metric on $\Sp^2$ and $y^\a$ are the position coordinates of $\Sp^2$ in $\R^3$. Finally, using the following relation
$$ \int_{S^2} y^\a y^\b y^\g y^\mu dvol_h =\frac{4\pi}{15}(\delta^{\a \b}\delta^{\mu \g} + \delta^{\a \mu}\delta^{\b \g} +\delta^{\a \g}\delta^{\b \mu}),  $$
and the second Bianchi identity, we obtain
$$\nabla \Scal(\bar{p})=0,$$
Which proves the theorem.\hfill$\square$ 

\appendix
\section{The linearized Willmore operator}
\label{aL}
The aim of this appendix is to derive the linearized Willmore equation and to classify its solution. 
\\The Willmore equation  for a conformal immersion $\Phi$ into $\R^3$ can be written as
$$ W'(\Phi)=\Delta_{\bar{g}}\left( H\right) + H \vert A^\circ \vert^2_{\bar{g}}=0,$$
where $\Delta_{\bar{g}} = \frac{2}{\vert \nabla \Phi\vert^2}\Delta$, $H$ is the mean curvature and $A^\circ$ is the trace-less second fundamental form. 
\\Equivalently one has 
$$H= \frac{1}{2}\langle \Delta_{\bar{g}} \Phi , \vec{\nu}\rangle $$ 
where $\vec{\nu}$ is the inward pointing unit normal of the immersion $\Phi$. Hence,  by multiplying the first equation by $\frac{\vert \nabla \Phi \vert^2}{2}$, we can consider the equivalent  equation
$$ \widetilde{W'}(\Phi)=\Delta H  + \langle \Delta \Phi , \vec{\nu}\rangle \frac{\vert A^\circ \vert^2_{\bar{g}}}{2}=0.$$
Of course any  conformal  parametrization, $\omega$,  of a round sphere is  a solution. Then, expanding $\widetilde{W'}(\omega +t \rho)$ for some function $\rho$ and using the fact that $A^\circ \equiv 0$ for a round sphere, we get 

\be
\label{lin}
L_\omega (\rho) := \delta \widetilde{W}_{\omega}(\rho)= -\Delta \left( \frac{\langle \Delta \rho , \omega\rangle +2 \langle \nabla \omega, \nabla \rho \rangle}{\vert \nabla \omega \vert^2 }\right) =0 .
\ee
Let also consider the linearization of the conformality condition, which gives 
\be
\label{linc}
\left\{\begin{array}{l}\langle \omega_x, \rho_x\rangle- \langle \omega_y, \rho_y\rangle=0
 \\ \langle \omega_x, \rho_y\rangle + \langle \omega_y, \rho_x\rangle=0 \end{array}\right.
\ee
In the following lemma we classify the solutions of the linearized operator following the previous work \cite{Lau1} of the first author concerning the linearized operator for the constant mean curvature equation.  

\begin{lem}
\label{coin} Let $\rho\in \mathring{H}^2( \R^2, \R^3)$\footnote{the pushforword of $H^{2}(S^2)$ on $\R^2$ via the stereographic projection. } be a solution of the linearized equation (\ref{lin})  which satisfies (\ref{linc}) and the additional normalizing conditions 
$$\nabla \rho(0)=0 \quad \text{ and } \quad \langle \nabla^2 \rho, \nabla \omega\rangle(0) =0\quad.$$
Then  $\nabla \rho\equiv 0.$    
\end{lem}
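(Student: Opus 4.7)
The strategy is to classify the $10$-dimensional kernel of $L_\omega$ subject to the linearized conformality condition \eqref{linc} as the space of infinitesimal M\"obius transformations of $\R^3$ acting on $\omega$, and then use the two normalizations at $z=0$ to reduce to the $3$-dimensional subspace of translations, for which $\nabla\rho\equiv 0$ holds automatically.

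I would first decompose $\rho = \varphi\,\omega + \rho^T$ where $\varphi := \langle\rho,\omega\rangle$ is the signed normal component. Since $\omega$ is a conformal parametrization of the unit sphere one has $\Delta\omega = -|\nabla\omega|^2 \omega$, and differentiating $\varphi$ twice yields
\begin{equation*}
\langle\Delta\rho,\omega\rangle + 2\langle\nabla\omega,\nabla\rho\rangle = \Delta\varphi + |\nabla\omega|^2\varphi = |\nabla\omega|^2\Bigl(\tfrac{1}{2}\Delta_{\bar g}\varphi + \varphi\Bigr).
\end{equation*}
Hence $L_\omega(\rho)=0$ reads $\Delta\bigl(\tfrac{1}{2}\Delta_{\bar g}\varphi+\varphi\bigr) = 0$ on $\R^2$. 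Elliptic regularity for this fourth-order equation gives $\rho\in C^\infty$; the bracketed expression is then a smooth bounded function on $\Sp^2$ and hence constant by Liouville. Thus $(\Delta_{\bar g}+2)\varphi$ is constant. Since $\ker(\Delta_{\bar g}+2)$ on $\Sp^2$ is spanned by the coordinate restrictions $\omega^1,\omega^2,\omega^3$ and a constant function is a particular solution of the inhomogeneous problem, one obtains the $4$-parameter family $\varphi = C + \langle a,\omega\rangle$ with $C\in\R$ and $a\in\R^3$.

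For the tangential part I would write $\rho^T = \alpha\,\omega_x + \beta\,\omega_y$. Plugging into \eqref{linc} and exploiting $|\omega_x|^2 = |\omega_y|^2$ and $\langle\omega_x,\omega_y\rangle = 0$, together with the explicit expressions for $\langle\omega_x,\omega_{xx}\rangle$, $\langle\omega_x,\omega_{xy}\rangle$, $\langle\omega_x,\omega_{yy}\rangle$ coming from differentiating these identities, the two conditions collapse to the Cauchy--Riemann system $\alpha_x=\beta_y$, $\alpha_y=-\beta_x$, so $f:=\alpha+i\beta$ is holomorphic on $\R^2\simeq\C$. Since $\omega_z$ decays like $|z|^{-2}$ at infinity and $\rho^T$ must extend smoothly across the pole, $f$ is a polynomial of degree at most $2$, contributing $6$ real parameters. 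The resulting $10$-parameter family is exactly the infinitesimal M\"obius action on $\omega$, with basis given by translations $\rho=a$, rotations $\rho=b\times\omega$, the dilation $\rho=s\,\omega$, and special conformal transformations $\rho=2\langle c,\omega\rangle\omega-c$; each satisfies $L_\omega(\rho)=0$ and \eqref{linc} by the M\"obius invariance of the Willmore equation and of the conformal structure.

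Finally, I would impose the normalizations. Translations $\rho=a$ trivially satisfy $\nabla\rho\equiv 0$ and so both conditions. For the seven non-translation generators I would compute $\rho_x(0),\rho_y(0)$ and the scalar contractions $\langle\omega_x,\rho_{xx}\rangle+\langle\omega_y,\rho_{xy}\rangle$ and $\langle\omega_x,\rho_{xy}\rangle+\langle\omega_y,\rho_{yy}\rangle$ at the origin, using the explicit values $\omega(0)=-e_3$, $\omega_x(0)=2e_1$, $\omega_y(0)=2e_2$, $\omega_{xx}(0)=\omega_{yy}(0)=4e_3$ and $\omega_{xy}(0)=0$ obtained from the standard stereographic formula for $\omega$. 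The condition $\nabla\rho(0)=0$ produces four independent linear relations $b_3=0$, $s=2c_3$, $b_1=2c_2$, $b_2=-2c_1$ that cut the kernel from $10$ to $6$ dimensions, and then $\langle\nabla^2\rho,\nabla\omega\rangle(0)=0$ eliminates the remaining non-translation parameters. Hence $\rho$ is a translation, so $\nabla\rho\equiv 0$.

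The main obstacle in the last step is a concrete linear-algebra check: after imposing $\nabla\rho(0)=0$ the non-translation part of the kernel is a specific dilation--SCT combination parametrized by $c\in\R^3$, and the two conditions coming from $\langle\nabla^2\rho,\nabla\omega\rangle(0)=0$ must be shown to be non-degenerate on this $3$-parameter complement using the explicit second derivatives of $\omega$ at the pole.
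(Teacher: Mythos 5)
Your normal/tangential splitting and the Cauchy--Riemann argument for the tangential part are correct and form a clean alternative to the paper's moving-frame decomposition of $\nabla\rho$, but the final linear-algebra step does not close, and the failure is exactly at the point you flagged as "a concrete check". After imposing $\nabla\rho(0)=0$ you are left with the three-parameter family indexed by $c\in\R^3$ (with $s=2c_3$, $b=(2c_2,-2c_1,0)$), while $\langle\nabla^2\rho,\nabla\omega\rangle(0)=0$ supplies only \emph{two} scalar equations --- and they do not see $c_3$. Concretely, take the $c=e_3$, $s=2$ member of your family,
\be
\rho_0:=2\,\omega+2\langle e_3,\omega\rangle\,\omega-e_3 .
\ee
Using $\omega(0)=-e_3$, $\omega_x(0)=2e_1$, $\omega_y(0)=2e_2$, $\omega_{xx}(0)=\omega_{yy}(0)=4e_3$, $\omega_{xy}(0)=0$, one finds $\nabla\rho_0(0)=0$ and $\rho_{0,xx}(0)=\rho_{0,yy}(0)=-8e_3$, $\rho_{0,xy}(0)=0$. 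These second derivatives are parallel to $\omega(0)$, hence orthogonal to $\omega_x(0)$ and $\omega_y(0)$, so \emph{every} contraction $\langle\nabla^2\rho_0,\nabla\omega\rangle(0)$ vanishes; yet $\nabla\rho_0\not\equiv0$. So your ten-dimensional kernel is not reduced to the translations by the two normalizations, and the asserted non-degeneracy on the three-parameter complement is false (the mismatch $2<3$ was already a warning sign).

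The ingredient you are missing is the one the paper extracts from $\rho\in\mathring H^2$ \emph{before} touching the pointwise conditions: the harmonic function $\bigl(\langle\Delta\rho,\omega\rangle+2\langle\nabla\omega,\nabla\rho\rangle\bigr)/\vert\nabla\omega\vert^2$ is forced by Liouville's theorem and the integrability coming from $\mathring H^2$ to vanish identically, see \eqref{eq:Liouville}; in your notation this kills the constant $C$ in $\varphi=C+\langle a,\omega\rangle$, i.e.\ removes the dilation from the kernel at the outset. You only conclude that this quantity is \emph{constant}, which is precisely what lets $\rho_0$ survive. Once $C=0$, the non-translation part of the kernel is six-dimensional (this is the paper's $a,b\in\mathrm{span}\{\psi_0,\psi_1,\psi_2\}$); then $\nabla\rho(0)=0$ gives the four independent relations $c_3=b_3=0$, $b_1=2c_2$, $b_2=-2c_1$, and the two conditions $\sum_j\langle\partial_i\partial_j\rho,\partial_j\omega\rangle(0)=0$ reduce to $32c_1=0$ and $32c_2=0$, which does close the argument. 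To repair your proof you must either reprove this Liouville/vanishing step in your normal--tangential language, or otherwise justify excluding the dilation mode; the pointwise normalizations alone cannot do it.
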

\begin{proof}
First we remark that thanks to the definition of $\mathring{H}^2( \R^2, \R^3)$, we have
$$\frac{\langle \Delta \rho , \omega\rangle +2 \langle \nabla \omega, \nabla \rho \rangle}{\vert \nabla \omega \vert^2 }\in L^2(\R^2) .$$
Hence, using Liouville's theorem, we get  that 
\be\label{eq:Liouville}
\langle \Delta \rho , \omega\rangle +2 \langle \nabla \omega, \nabla \rho \rangle =0.
\ee
Then, thanks to the fact that $(\omega_x,\omega_y,\omega)$ is a basis of $\R^3$ and (\ref{linc}), there exists $a,b,c,d: \R^2 \rightarrow \R$ such that 

\be
\label{dec}
\left\{\begin{array}{l}\rho_x = a \omega_x + b\omega _y + c \omega  \\ \rho_y = -b \omega_x + a \omega _y + d \omega \quad.\end{array}\right.
\ee

Then, plugging (\ref{dec}) into \eqref{eq:Liouville} and using the relation $\rho_{xy}=\rho_{yx}$, we see that $a,b,c,d$ satisfy  the following equations
\begin{eqnarray} 
a_y +b_x&=&d \quad \label{de1} \\
b_y -a_x &=&-c \quad \label{de2}\\
c_y -d_x&=& b \vert \nabla \omega \vert^2 \quad  \nonumber \\
c_x +d_y&=&-a \vert \nabla \omega \vert^2 \quad. \nonumber 
\end{eqnarray}
These equations imply that $a$ and $b$ satisfy
$$\Delta a = - a \vert \nabla \omega\vert^2, \quad \Delta b = - b \vert \nabla \omega\vert^2 . $$
Since $\rho \in \mathring{H}^1( \R^2, \R^3)$, then  $a$ and $b$ can be seen as  functions in $H^1(S^2)$ satisfying $\Delta \alpha = 2 \alpha$ , therefore  $a$ and $b$ are linear combinations of the first non vanishing eigenfunctions of $\Delta_{\Sp^2}$ (see also Lemma C.1 of \cite{Lau1}),  that is to say
 $$a = \sum_{i=0}^{2} a_i \psi_i \hbox{ and } b = \sum_{i=0}^{2} b_i \psi_i $$
 where 
$$\psi_i (x) = \frac{x_i}{(1+\vert x\vert^2)} \hbox{ for } i=1,2 \hbox{ and } \psi_0(x)= \frac{1-\vert x\vert^2}{1+\vert x \vert^2}. $$
Finally using the fact that $\nabla \rho(0) =0$ and $\langle \nabla^2 \rho, \nabla \omega\rangle(0) =0$, (\ref{de1}) and (\ref{de2}), we can conclude that $a\equiv b\equiv c\equiv  d \equiv0$, which proves the lemma.\end{proof}


\begin{thebibliography}{99}
\bibitem{BRQ}Y. Bernard, T. Rivi\`ere,
{\it Energy quantization for Willmore Surfaces and Applications, } arXiv:1106.3780, (2011).

\bibitem{BRRem} Y. Bernard, T. Rivi\`ere,
{\it Asymptotic Analysis of Branched Willmore Surfaces, } preprint arXiv:1106.4642  (2011),  Pacific J.  Math. (in press).

\bibitem{Bry} R. Bryant,
{\it A duality theorem for Willmore surfaces, } J. Diff. Geom., Vol. 20, (1984), 23?53.

\bibitem{CM} A. Carlotto, A. Mondino, 
{\it Existence of Generalized Totally Umbilic $2$-Spheres in Perturbed $3$-Spheres}
Int. Math. Res Not. (in press) doi:10.1093/imrn/rnt155. 


\bibitem{CY} D. Christodoulou, S. T.  Yau,
 { \it Some remarks on the quasi-local mass,}  Mathematics and general relativity (Santa Cruz, CA, 1986), 914, 
Contemp. Math., 71, Amer. Math. Soc., Providence, RI, (1988).

\bibitem{Druet} O. Druet,
{ \it Sharp local Isoperimetric inequalities involving the scalar curvature,}  Proc. Am. Math. Soc., Vol. 130, Num. 8, 2351-2361, (2002).

\bibitem{DHR} O. Druet, E. Hebey, F. Robert, 
{\it Blow-up theory for elliptic PDEs in Riemannian geometry,} Mathematical Notes, Vol. 45, Princeton Univ. Press, (2004). 

 \bibitem{He} F. H\'elein,
 {\it Harmonic maps, conservation laws and moving frames} Cambridge Tracts in Math. 150, Cambridge Univerity Press, (2002).

\bibitem{KL}
E. Kuwert, Y. Li, 
{\it $W^{2,2}$-conformal immersions of a closed  Riemann surface into $\R^n$}, Comm. Anal. Geom., Vol. 20, Num. 2, (2012), 313--340.

\bibitem{KMS} E. Kuwert, A. Mondino, J. Schygulla, 
{\it Existence of immersed spheres minimizing curvature functionals
in compact 3-manifolds,}  arXiv:1111.4893, (2011). 

\bibitem{KS1} E. Kuwert, R. Sch\"atzle:
{\it The Willmore flow with small initial energy,} J. Differ.
Geom., Vol. 57, Num. 3, (2001), 409-441. 

\bibitem{KS} E. Kuwert, R. Sch\"atzle,
{\it Removability of isolated singularities of Willmore surfaces,} Annals of Math. Vol. 160, Num. 1, (2004), 315--357. 

\bibitem{KS2} E. Kuwert, R. Sch\"atzle,
{\it Branch points for Willmore surfaces,} Duke Math. J., Vol. 138, (2007), 179--201. 

\bibitem{LM1} T. Lamm, J. Metzger,
{\it Small surfaces of Willmore type in Riemannian manifolds,} Int. Math. Res. Not. 19 (2010), 3786-3813. 

\bibitem{LM2} T. Lamm, J. Metzger,
{\it Minimizers of the Willmore functional with a small area constraint,} 
Annales IHP-Anal.  Non Lin., Vol.  30, (2013), 497-518. 

\bibitem{LMS} T. Lamm, J. Metzger, F. Schulze,
{\it Foliations of asymptotically flat manifolds by surfaces of Willmore type,} Math. Ann., Vol. 350, Num. 1, (2011), 1--78.
 
\bibitem{Lau1} P. Laurain, 
{\it Concentration of $CMC$ surfaces in a 3-manifold,}   Int. Math. Res. Notices, Vol. 2012, (2012),  5585--5649. 

\bibitem{LY} P. Li, S. T. Yau,
{\it A new conformal invariant and its applications to the Willmore conjecture
and the first eigenvalue of compact surfaces,} Invent. Math.,  Vol. 69, Num. 2, (1982), 269--291.


\bibitem{MN} F. Marques, A. Neves, 
{\it Min-max theory and the Willmore conjecture,} Ann. Math., \textsl{to appear}.
 

\bibitem{Mon1} A. Mondino, 
{\it Some results about the existence of critical points for the Willmore functional,} Math. Zeit., Vol. $266$, Num. $3$, (2010), 583--622.

\bibitem{Mon2} A. Mondino, 
{\it The conformal Willmore Functional: a perturbative approach,}   preprint (2009), J. Geom. Anal.,  Vol. 23, Num. 2, (2013),  764--811.

\bibitem{MonNard} A. Mondino, S. Nardulli,
{\it Existence of Isoperimetric regions in non-compact Riemannian manifolds under Ricci or scalar curvature conditions}, 
 preprint arXiv:1210.0567, (2012).

\bibitem{MR1} A. Mondino, T. Rivi\`ere, 
{\it Immersed Spheres of Finite Total Curvature into Manifolds.,} Adv. Calc. Var. (in press), DOI 10.1515/acv-2013-0106.

\bibitem{MR2} A. Mondino, T.Rivi\`ere, 
\textit{ Willmore spheres in compact Riemannian manifolds,} Advances Math., Vol. 232, Num. 1, (2013), 608--676.

\bibitem{MonSchy} A. Mondino, J. Schygulla, 
{\it Existence of immersed spheres minimizing curvature functionals in non-compact 3-manifolds,} arXiv:1201.2165, (2012), Annales IHP-Anal. Non Lin. (in press).



\bibitem{MonRos} S. Montiel, A. Ros, 
{\it Minimal immersions of surfaces by the first Eigenfunctions and conformal area,} Invent. Math., Vol. 83, (1986), 153--166.

\bibitem{MS} S. M\"uller, V. Sver\'ak 
{\it On surfaces of finite total curvature } J. Diff. Geom., Vol. $42$, Num. $2$, (1995), 229--258.  

\bibitem{NardAGA} S. Nardulli, 
{\it The isoperimetric profile of a smooth Riemannian manifold for small volumes}, Ann. Glob. Anal. Geom., Vol. 36, Num. 2, 111-131, (2009).

\bibitem{Riv1} T. Rivi\`ere,
{\it Analysis aspects of Willmore surfaces,}  Invent. Math., Vol. $174$, Num. 1, (2008), 1-45.

\bibitem{Riv2} T. Rivi\`ere,
{\it Variational Principles for immersed Surfaces with $L^2$-bounded Second Fundamental Form,}  arXiv:1007.2997, preprint (2010), J. Reine. Angew. Math. (in press).

\bibitem{RivPCMI}  T. Rivi\`ere,
{\it Weak immersions of surfaces with $L^2$-bounded second fundamental form}, PCMI Graduate Summer School, (2013). Downloadable at http://www.math.ethz.ch/~riviere/pub.html.

\bibitem{Ros} A. Ros,
{\it The Willmore conjecture in the real projective space,} Math. Res. Lett., Vol. 6,
(1999), 487--493.

\bibitem{SiL} L. Simon,
{\it Existence of surfaces minimizing the Willmore functional, } Comm. Anal. Geom., Vol. $1$, Num. $2$, (1993), 281--325. 

\bibitem{Top} P. Topping,
{\it Towards the Willmore conjecture.} Calc. Var. and PDE, Vol. 11, (2000), 361--393.


\bibitem{Urb} F. Urbano, 
{\it Minimal surfaces with low index in the three-dimensional sphere}, Proc.
Amer. Math. Soc., Vol. 108, (1990), 989--992. 

\bibitem{Will} T.J. Willmore,
{\it Riemannian Geometry, } Oxford Science Publications, Oxford University Press (1993).


\end{thebibliography}
\end{document}